\newtheorem{theorem}{Theorem}[section]
\newtheorem{definition}[theorem]{Definition}
\newtheorem{corollary}[theorem]{Corollary}
\newtheorem{lemma}[theorem]{Lemma}
\newtheorem{remark}[theorem]{Remark}
\newcommand{\bbeta}{\boldsymbol{\beta}}
\newcommand{\brho}{\boldsymbol{\rho}}
\newcommand{\btau}{\boldsymbol{\tau}}
\newcommand{\bphi}{\boldsymbol{\phi}}
\newcommand{\bpsi}{\boldsymbol{\psi}}
\newcommand{\bomega}{\boldsymbol{\omega}}
\newcommand{\bPhi}{\boldsymbol{\Phi}}
\title[]{Shape Taylor expansion for wave scattering problems} 
\numberwithin{equation}{section}
\author{Gang Bao}
\address{School of Mathematical Sciences, Zhejiang University,
	Hangzhou, Zhejiang 310027, China}
\email{baog@zju.edu.cn}
\author{Haoran Ma}
\address{School of Mathematical Sciences, Zhejiang University,
	Hangzhou, Zhejiang 310027, China}
\email{MaHaoran77@zju.edu.cn}
\author{Jun Lai}
\address{School of Mathematical Sciences, Zhejiang University,
	Hangzhou, Zhejiang 310027, China}
\email{laijun6@zju.edu.cn}
\author{Jingzhi Li}
\address{Department of Mathematics, Southern University of Science and Technology,
	Shenzhen, Guangdong 518055, China}
\email{li.jz@sustech.edu.cn}
\subjclass[2020]{35Q61, 35J05, 35R30, 49J50, 78M50}
\keywords{shape derivatives, domain derivatives, shape optimizations, wave scattering, inverse wave scattering}
\begin{document}

\begin{abstract}
 The Taylor expansion of wave fields with respect to shape parameters has a wide range of applications in wave scattering problems, including inverse scattering, optimal design, and uncertainty quantification. However, deriving the high order shape derivatives required for this expansion poses significant challenges with conventional methods. This paper addresses these difficulties by introducing elegant recurrence formulas for computing high order shape derivatives. The derivation employs tools from exterior differential forms, Lie derivatives, and material derivatives. The work establishes a unified framework for computing the high order shape perturbations in scattering problems. In particular, the recurrence formulas are applicable to both acoustic and electromagnetic scattering models under a variety of boundary conditions, including Dirichlet, Neumann, impedance, and transmission types. 
\end{abstract}

\maketitle

\section{Introduction}\label{introduction}

As a fundamental tool to explore the effect of shape perturbations, the shape derivative plays a crucial role in wave scattering. Its applications span a wide range of areas, including shape optimizations~\cite{bao2014optimal,ma2024inverse}, inverse scattering~\cite{hettlich1995frechet,hettlich1996iterative}, uncertainty quantification~\cite{dolz2020higher,hao2018computation,harbrecht2013first}, and more. One standard approach to solving inverse obstacle problems is the iterative method based on first order shape gradient~\cite{hagemann2019solving}. However, there are many applications requiring higher order perturbations for more accurate approximations, especially in wave scattering from scatterers with small scale structures and inverse scattering problems requiring faster convergence rates~\cite{bui2012analysis1,bui2013analysis3,hagemann2020application}. Despite this need, research on the theory of second or higher order shape derivatives is extremely limited due to the complicated derivation process by standard approaches. To bridge this gap, this paper focuses on developing the high order shape differential theory for scattering problems.

Analogous to classical differential calculus, 
the concept of differentiability of wave fields with respect to (\textit{w.r.t.}) the boundaries of scatterers has a long history~\cite{colton2019inverse}. Early work on shape derivatives in wave scattering mainly focused on acoustics. For instance, \cite{kirsch1993domain} employed the domain derivative method to solve the inverse obstacle scattering problem with a sound soft boundary condition, and \cite{hettlich1995frechet} investigated the existence and characterization of Fr\'{e}chet derivatives for transmission and Robin boundary problems. Subsequently, shape derivatives for electromagnetic fields were developed in works such as~\cite{costabel2012shape,haddar2004frechet,potthast1996domain}. In particular, \cite{potthast1996domain} proved that the solution to the scattering problem is infinitely differentiable w.r.t. the boundary of the obstacle based on the integral equation approach, and ~\cite{haddar2004frechet} established Fr\'{e}chet differentiability of the solution under the impedance boundary condition. Currently, a large number of related works~\cite{bao2015inverse,bao2019inverse,feijoo2003application,hagemann2019solving,kress2018inverse} show that shape derivatives are crucial tools for applications requiring sensitivity analysis. However, methods for deriving shape derivatives remain relatively limited. In particular, the material derivative method introduced by~\cite{kirsch1993domain} continues to be a primary tool, but it is often tailored to specific equations and boundary conditions. Even for the same equation, methods based on variational formulations~\cite{hettlich2012domain}, and boundary integral equations~\cite{costabel2012shape} may have very different (albeit equivalent) forms, resulting in unnecessary confusion.

In contrast to the extensive attention given to first order shape derivatives, significantly less focus has been placed on second order or higher order derivatives~\cite{afraites2008second,bui2012analysis1}. Nevertheless, high order expansions are essential in many applications~\cite{bui2013analysis3,dolz2020higher,hettlich1999second,hagemann2020application}. For example,~\cite{hettlich1999second} demonstrated that the inverse scattering algorithm based on second order shape derivatives achieves greater efficiency compared to first order methods. Similarly, \cite{dolz2020higher} showed that higher order expansion techniques exhibit improved stability in random scattering problems. However, computing the higher order shape Taylor expansion of scattered fields remains a significant challenge. A primary reason is the increased complexity of deriving higher order shape perturbations, which is considerably more tedious than first order derivations.  The intricacy can be found, for instance, in~\cite{dolz2020higher,hettlich1999second}. This computational complexity has hindered the broader development and adoption of higher order perturbation methods. Thus, in this paper, we aim to provide a systematic approach for high order shape derivatives in acoustic and electromagnetic scattering problems.

Our work is motivated by~\cite{hiptmair2013shape}, which introduced an elegant geometric approach using differential forms and Lie derivatives to study the shape derivatives of boundary value problems. Based on these geometric tools,~\cite{hiptmair2018shape} systematically extended the derivation and established results for first order shape derivatives in acoustic and electromagnetic scattering models under various commonly used boundary conditions. The contribution of our work lies in generalizing this approach to higher order shape derivatives and proposing formulas for the shape Taylor expansion in scattering problems.The significance of Taylor expansions for the scattered field with respect to the boundaries of scatterers is analogous to that of Taylor expansions in classical differential theory: they fully reveal the connection between the local properties of a shape functional and its shape derivatives. The primary challenge of this task is the derivation complexity raised from surface differential operators, including surface gradient, divergence, and curl. While surface gradient operators effectively simplify the expression of first order derivatives~\cite{hiptmair2018shape}, they are less suitable for deriving higher order cases. To address this difficulty, we employ the trace operator decomposition, which eliminates the need for surface differential operators. This approach facilitates the recursive derivation of higher order shape derivatives, significantly simplifying the process.

In the context of shape optimization, it is important to distinguish between shape sensitivity and shape derivatives, as they represent related but distinct concepts~\cite{adriaens2021adjoint}. Shape sensitivity refers to the gradient of an objective function, which depends on the scattered field, with respect to shape parameters.  Its primary computational tool is the adjoint-state method~\cite{adriaens2021adjoint,ma2024inverse}, which relies on shape derivatives during the derivation for shape optimizations. Another related concept is the topological derivative, which studies the perturbation effect of topological changes on the scattered field. This concept has found applications in shape reconstruction~\cite{feijoo2004new} and topology optimization~\cite{sisamon2012acoustic}. Shape derivatives serve as an important theoretical basis for topological derivatives~\cite{novotny2019applications}. However, this paper does not address topological derivative theory, as its focus is exclusively on the derivation of high order shape derivatives for scattered fields.

This paper is organized as follows: in Section~\ref{sect2}, we introduce the definitions and essential geometric tools related to shape derivatives and shape Taylor expansion. Section~\ref{sect3} provides a detailed derivation of recursive shape derivative formulas using exterior differential forms. This section consists of two subsections, where subsection~\ref{DiriBound} focuses on wave equations with Dirichlet boundary conditions, also referred to as sound soft boundary in acoustics and perfect electric conductor (PEC) in electromagnetics. Subsection~\ref{NeumannBound} addresses the shape derivative for Neumann boundary conditions, known as sound hard boundary in acoustics and perfect magnetic conductor (PMC) in electromagnetics. 
Extension of the analysis to impedance and transmission boundary conditions is given in Section~\ref{sec4}. Section~\ref{VecPro} presents formulas for the second order shape derivatives under the vector proxies in Euclidean space for acoustics and electromagnetic equations. The paper is concluded in Section~\ref{secCon}.

\section{Notations and definitions}\label{sect2}
In this section, we recall some necessary notations and tools related to differential forms and other essential concepts introduced by~\cite{hiptmair2018shape,hiptmair2013shape} to describe the main conclusions of shape derivatives in scattering problems. Detailed definitions and operating rules for these forms can also be found in~\cite{sokolowski1992introduction}.   
\begin{figure}
    \centering
    \includegraphics[width = 0.25\textwidth]{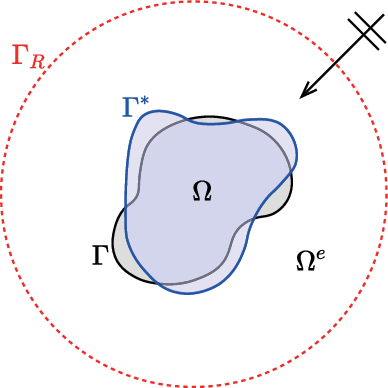}
    \caption{The scattering model with a perturbed boundary.}
    \label{FIGmodel}
\end{figure}

Suppose that $\Omega$ is a bounded domain in $\mathbb{R}^d$ with a smooth boundary $\Gamma:=\partial \Omega$, where $d\ge1$ is the dimension. Let $l$ be an integer satisfying $0\le l\le d$. The set containing all $l-$forms is abbreviated as $\land^l$. The scattered field for acoustics and electromagnetics can be represented as a differential $l-$form $\bomega$ defined in  $\Omega^e:=\mathbb{R}^d\backslash \bar{\Omega}$, with the corresponding equation given by
\begin{eqnarray}\label{EqD}
    (-1)^{d-l}\mathbf{d}(\star_\alpha\mathbf{d}\bomega) + (-1)^d\star_{k^2}\bomega = \mathbf{0}\qquad&{\rm in}\quad \Omega^e,
\end{eqnarray}
where $\star_\alpha$ and $\star_{k^2}$ are Hodge-$\star$ operators. The subscript $\alpha$ and $k^2$ of $\star$ indicate different Riemannian metrics, with $\alpha$ representing the medium parameter such as material density, permittivity, permeability, etc., and $k$ representing the wavenumber. The bold symbol $\mathbf{d}$ denotes the exterior differential operator. Detailed definitions for the Hodge-$\star$ operator, $\mathbf{d}$, and exterior differential forms can be found in~\cite{frankel2011geometry, hiptmair2013shape}.
 
 The boundary condition on $\Gamma$, denoted by $BC(\cdot)$, depends on the types of scatterers. It is assumed to be given by
 \begin{eqnarray}\label{EqD2}
     BC(\bomega) = \bphi\qquad&{\rm on} \quad \Gamma,
 \end{eqnarray}
   where the right-hand side, $\bphi$, is determined by the incident field.
   
  In general, the scattered field is defined in an unbounded region and must satisfy radiation conditions as $|\mathbf{x}|\rightarrow\infty$. Here we employ a transparent boundary condition given on a truncated spherical boundary with radius $R$, denoted by $\Gamma_R$ with interior $B_R$, to simplify the far field condition of the field: 
  \begin{eqnarray}\label{EqD3}
      TBC(\bomega) = \bpsi\quad&{\rm on} \quad \Gamma_R.
  \end{eqnarray}
  Without causing ambiguity, we slightly abuse the notation by denoting $B_R\backslash \bar{\Omega}$ as $\Omega^e$. A typical scattering model is illustrated in Fig.~\ref{FIGmodel}. 

\begin{remark}
When $d=2$ and $l=0$, equation~\eqref{EqD} is equivalent to the planar acoustic field equation or the vertically polarized electric (magnetic) field equation in $\mathbb{R}^2$. When $d=3$ and $l=0$,~\eqref{EqD} corresponds to the spatial acoustic field equation in $\mathbb{R}^3$. When $d=3$ and $l=1$,~\eqref{EqD}  represents the electric (magnetic) field equation in $\mathbb{R}^3$. 
\end{remark}

We employ the velocity field method presented in~\cite{sokolowski1992introduction} to study the perturbation effect of $\Gamma$. Specifically, given a time parameter $t\in[0,T]$ and a regular domain $D\subset\mathbb{R}^d$, we introduce a velocity field  $\mathbf{v}(t,\mathbf{x})\in\mathbf{C}^{\infty}([0,T]\times\mathbb{R}^d,\mathbb{R}^d)$. Let $T^{\mathbf{v}}_t: \mathbb{R}^d\rightarrow\mathbb{R}^d$ denote the diffeomorphism induced by $\mathbf{v}$ and $t$, which is given by
    \begin{eqnarray}
    T^{\mathbf{v}}_t(\mathbf{X}) = \mathbf{x}(t,\mathbf{X})\mbox{ with }\frac{d\mathbf{x}(t,\mathbf{X})}{dt} = \mathbf{v}(t,\mathbf{x})\mbox{ and } \mathbf{x}(0,\mathbf{X}) = \mathbf{X}\in \mathbb{R}^d.
    \end{eqnarray}
Let $D_t: = T^{\mathbf{v}}_t(D)$. It yields that ${T_t^{\mathbf{v}}}^\star:\land^l(D_t)\rightarrow\land^l(D)$ is a pull-back map induced by $T^{\mathbf{v}}_t$. For each velocity field $\mathbf{v}$, denote the contraction operator induced by $\mathbf{v}$ as $\mathbf{i}_\mathbf{v}:\land^l(D)\rightarrow\land^{l-1}(D)$.

Now let us recall the definition and computational formula of Lie derivative~\cite{frankel2011geometry}:
    \begin{definition}
        For any $l-$form $\bomega$ defined on $D$, the Lie derivative of $\bomega$ w.r.t. a velocity field $\mathbf{v}$ is defined as
        \begin{eqnarray}
            \mathcal{L}_{\mathbf{v}}\bomega: = \lim_{t\rightarrow0}\frac{{T_t^{\mathbf{v}}}^\star\bomega - \bomega}{t},
        \end{eqnarray}
        which can be calculated by Cartan's formula~\cite{delfour2011shapes}:
        \begin{eqnarray}\label{Cartan}
            \mathcal{L}_\mathbf{v}\bomega = (\mathbf{i}_\mathbf{v}\mathbf{d} + \mathbf{d}\mathbf{i}_\mathbf{v})\bomega.
        \end{eqnarray}
    \end{definition}
The contraction operator $\mathbf{i}_\mathbf{v}$ and the exterior differential operator $\mathbf{d}$ satisfy the distributive law under the operation of exterior product $\land$:
\begin{subequations}\label{distrlaw}
\begin{align}
    \mathbf{i}_\mathbf{v}(\bomega_1\land\bomega_2) =& \mathbf{i}_\mathbf{v}\bomega_1\land\bomega_2 + (-1)^{l_1}\bomega_1\land\mathbf{i}_\mathbf{v}\bomega_2,\\
    \mathbf{d}(\bomega_1\land\bomega_2) =& \mathbf{d}\bomega_1\land\bomega_2 + (-1)^{l_1}\bomega_1\land\mathbf{d}\bomega_2,
\end{align}
\end{subequations}
where $\bomega_1\in\land^{l_1}$, $\bomega_2\in\land^{l_2}$ with $l_1, l_2\in \mathbb{N}$. To simplify the discussion, all velocity fields are assumed to be time-independent, in which case we can rewrite $\mathbf{v}(t,\mathbf{x})$ as $\mathbf{v}(\mathbf{x})$. When $D = \Omega^e$ in equation~\eqref{EqD} and $T^{\mathbf{v}}_t$ transforms $D$ to $D_t$, $\bomega(D)$ is also transformed to $\bomega(D_t)$. We are interested in finding the shape derivative of $\bomega(D_t)$ at $t=0$. The precise definitions of first and second order shape derivatives are given by: 
    \begin{definition}
        Let $\mathbf{v}$ be a time-independent velocity field in $\mathbb{R}^d$. For an $l-$form $\bomega$ depending on $D_t: = T^{\mathbf{v}}_t(D)$, the shape derivative of $\bomega$ w.r.t. $\mathbf{v}$ is defined as
        \begin{eqnarray}
            \delta_{\mathbf{v}}\bomega = \lim_{t\rightarrow0}\frac{\bomega(D_t) - \bomega(D)}{t}.
        \end{eqnarray}
    \end{definition}
    \begin{definition}
        Let $\mathbf{v}_1$ and $\mathbf{v}_2$ be two time-independent velocity fields in $\mathbb{R}^d$, and $t_1$ and $t_2$ be two independent time parameters. For an $l-$form $\bomega$ depending on $D_{[t_1,t_2]}:=T^{\mathbf{v}_2}_{t_2}\circ T^{\mathbf{v}_1}_{t_1}(D)$, the second order shape derivative of $\bomega$ w.r.t. $\mathbf{v}_1$ and $\mathbf{v}_2$ is defined as
        \begin{eqnarray}
            \delta_{[\mathbf{v}_1,\mathbf{v}_2]}\bomega = 2\lim_{t_1,t_2\rightarrow 0}\frac{\bomega\left(D_{[t_1,t_2]}\right) - \bomega(D) - t_1\delta_{\mathbf{v}_1}\bomega  - t_2\delta_{\mathbf{v}_2}\bomega}{t_1 t_2}.
        \end{eqnarray}
    \end{definition}
    Following the definitions above, one can inductively define the higher order shape derivatives $\delta_{[\mathbf{v}_1,\dots,\mathbf{v}_N]}\bomega$ for $N$ time-independent velocity fields $\mathbf{v}_1,\dots,\mathbf{v}_N$. In general, we assume the velocity field $\mathbf{v}$ has a compact support in $\mathbb{R}^d$, with its support located within a strip-like region along the boundary $\Gamma$. Since $\mathbf{v}$ depends on $\Gamma$, we also need to define the shape derivative of a velocity field w.r.t. another velocity field:
    \begin{definition}
        Let $\mathbf{v}$ and $\mathbf{w}$ be two time-independent velocity fields defined in $\mathbb{R}^d$. The shape derivative of $\mathbf{v}$ w.r.t. $\mathbf{w}$ is defined as
        \begin{eqnarray}
            \delta_{\mathbf{w}}\mathbf{v} = \lim_{t\rightarrow0}\frac{\mathbf{v}(T^\mathbf{w}_t(D)) - \mathbf{v}(D)}{t}.
        \end{eqnarray}
    \end{definition}

    After introducing the definitions of shape derivatives, we are ready to give the definition of the shape Taylor expansion. For a single velocity field $\mathbf{v}$ and time parameter $t$, the scattered fields induced by $\Gamma$ and $\Gamma_t: = \partial D_t$ are denoted by $\bomega$ and $\bomega_t$, respectively. The shape Taylor expansion of $\bomega_t$ based on $\bomega$, $\mathbf{v}$, and $t$ is given by
\begin{eqnarray}\label{Taysingle}
    \bomega_t = \bomega + t\delta_{\mathbf{v}}\bomega + \frac{t^2}{2}\delta_{[\mathbf{v},\mathbf{v}]}\bomega + \dots + \frac{t^N}{N!}\delta_{[\mathbf{v},\dots,\mathbf{v}]_{1\times N}}\bomega + o(t^{N}).
\end{eqnarray}
Further, let $\{\mathbf{v}_1, \mathbf{v}_2,\dots, \mathbf{v}_m\}$ be $m$ time-independent velocity fields defined in $\mathbb{R}^d$ and $\{t_1,t_2,\dots,t_m\}$ be $m$ independent time parameters. The perturbed boundary is given by
\begin{eqnarray}\label{movingboun}
    \Gamma_{[t_1,\dots,t_m]} := T^{\mathbf{v}_m}_{t_m}\circ T^{\mathbf{v}_{m-1}}_{t_{m-1}}\circ \dots \circ T^{\mathbf{v}_1}_{t_1}(\Gamma).
\end{eqnarray}
Denote $\bomega_{[t_1,\dots,t_m]}$ as the scattered field induced by the perturbed boundary $\Gamma_{[t_1,\dots,t_m]}$.  The multivariable Taylor expansion is then given by
\begin{eqnarray}\label{Taymulti}
\begin{aligned}
    \bomega_{[t_1,\dots,t_m]} =& \bomega + \sum_{j = 1}^{m}t_j\delta_{\mathbf{v}_j}\bomega +\dots+\frac{1}{N!}\sum_{j_1,\dots,j_N = 1}^{m}\Big(\prod \limits_{k=1}^N t_{j_k}\Big)\delta_{[\mathbf{v}_{j_1},\dots,\mathbf{v}_{j_N}]}\bomega\\
    &+ o\Big(P_N(t_1,\dots,t_m)\Big),
\end{aligned}
\end{eqnarray}
where $P_N(\cdot)$ is a polynomial of order $N$ in $m$ variables .

According to equations~\eqref{Taysingle} and~\eqref{Taymulti}, it is evident that shape derivatives are the key components of the shape Taylor expansions. In the following, we will derive the shape derivatives under four types of boundary conditions: Dirichlet, Neumann, impedance, and transmission conditions. The results for Dirichlet and Neumann conditions represent the main contributions of this paper, while the results for impedance and transmission conditions can be seen as corollaries of the Dirichlet and Neumann cases. Consequently, we provide a detailed derivation for the Dirichlet and Neumann cases in Section~\ref{sect3}, and the discussion of the impedance and transmission cases is presented in Section~\ref{sec4}.

\section{Shape derivatives for Dirichlet and Neumann boundary conditions}\label{sect3}
This section provides the derivation of shape derivatives of all orders under the Dirichlet and Neumann boundary conditions. The following lemma is fundamental to the derivation. 
\begin{lemma}\label{lemma21}
    Given a velocity field $\mathbf{w}$, define $\mathcal{J}_t(\bomega):=\int_{D_t}\mathbf{i}_{\mathbf{v}(D_t)}\bomega(D_t)$ as a domain functional on the moving geometry $D_t$. The material derivative of $\mathcal{J}_t(\bomega)$ w.r.t. $\mathbf{w}$ at $t=0$ is given by
    
    \begin{eqnarray}\label{deW1}
        \lim_{t\rightarrow0}\frac{\mathcal{J}_t(\bomega) - \mathcal{J}_0(\bomega)}{t} = \int_D \mathbf{i}_{\delta_\mathbf{w} \mathbf{v}}\bomega + \mathbf{i}_\mathbf{v}\delta_\mathbf{w}\bomega + \mathcal{L}_\mathbf{w}\int_D\mathbf{i}_\mathbf{v}\bomega.
    \end{eqnarray}
\end{lemma}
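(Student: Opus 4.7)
The plan is to eliminate the $t$-dependence of the integration domain by pulling back along $T_t^{\mathbf{w}}$ so that everything lives on the fixed reference $D$, and then differentiate the integrand pointwise. Setting $\alpha(t):=\mathbf{i}_{\mathbf{v}(D_t)}\bomega(D_t)$, the change-of-variables formula for differential forms gives
$$\mathcal{J}_t(\bomega) \;=\; \int_{D_t}\alpha(t) \;=\; \int_D {T_t^{\mathbf{w}}}^{\star}\alpha(t),$$
so the task reduces to identifying the pointwise limit of $t^{-1}\bigl[{T_t^{\mathbf{w}}}^{\star}\alpha(t)-\alpha(0)\bigr]$ on $D$ and then integrating.

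The key step is a Leibniz rule for differentiating the pull-back of a time-dependent form, which I would obtain by the decomposition
$${T_t^{\mathbf{w}}}^{\star}\alpha(t) - \alpha(0) \;=\; \bigl[{T_t^{\mathbf{w}}}^{\star}\alpha(0) - \alpha(0)\bigr] \;+\; {T_t^{\mathbf{w}}}^{\star}\bigl[\alpha(t) - \alpha(0)\bigr].$$
Dividing by $t$ and sending $t\to 0$, the first bracket tends to $\mathcal{L}_{\mathbf{w}}\alpha(0)$ by the very definition of the Lie derivative recorded in Section~\ref{sect2}, while the second tends to $\partial_t\alpha|_{t=0}$ because ${T_0^{\mathbf{w}}}^{\star}=\mathrm{id}$ and the pull-back is continuous in $t$. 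It remains to compute $\partial_t\alpha|_{t=0}=\delta_{\mathbf{w}}(\mathbf{i}_{\mathbf{v}}\bomega)$, and since the contraction is bilinear in the pair $(\mathbf{v},\bomega)$ an elementary product rule yields $\delta_{\mathbf{w}}(\mathbf{i}_{\mathbf{v}}\bomega)=\mathbf{i}_{\delta_{\mathbf{w}}\mathbf{v}}\bomega+\mathbf{i}_{\mathbf{v}}\delta_{\mathbf{w}}\bomega$. Summing the two contributions and adopting the paper's shorthand $\mathcal{L}_{\mathbf{w}}\int_D(\cdot)=\int_D\mathcal{L}_{\mathbf{w}}(\cdot)$ for the transport-theorem term produces the stated formula.

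The main technical obstacle is justifying the interchange of the $t$-limit with the integral over $D$. Under the smoothness and compact-support hypotheses already imposed on $\mathbf{w}$ in Section~\ref{sect2}, together with the assumed regularity of $\mathbf{v}$ and $\bomega$ in a neighborhood of $\overline{D}$, the difference quotients $t^{-1}\bigl[{T_t^{\mathbf{w}}}^{\star}\alpha(t)-\alpha(0)\bigr]$ converge uniformly on any fixed compact set containing $D_t$ for all sufficiently small $|t|$, so a routine dominated-convergence argument applies. Existence of the shape derivatives $\delta_{\mathbf{w}}\mathbf{v}$ and $\delta_{\mathbf{w}}\bomega$ entering the product rule is immediate from the definitions given in Section~\ref{sect2}, and the bilinear Leibniz identity for $\mathbf{i}_{(\cdot)}(\cdot)$ is structurally analogous to the distributive laws~\eqref{distrlaw} already recorded for $\mathbf{i}_{\mathbf{v}}$ and $\mathbf{d}$ on exterior products.
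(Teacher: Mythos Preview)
Your argument is correct and follows essentially the same route as the paper: pull back by ${T_t^{\mathbf{w}}}^{\star}$ to a fixed domain, then add and subtract to isolate a Lie-derivative term and a shape-derivative-of-the-integrand term, the latter split via bilinearity of the contraction. The only cosmetic difference is that the paper performs the three-way telescoping split of $T_t^{\mathbf{w}\star}\mathbf{i}_{\mathbf{v}(D_t)}\bomega(D_t)-\mathbf{i}_{\mathbf{v}(D)}\bomega(D)$ in one step, whereas you first group the integrand as $\alpha(t)$ and then apply the product rule; your added remark on dominated convergence is a welcome rigor the paper omits.
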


\begin{proof}
    Recall the definitions of $\mathbf{i}$, ${T_t^{\mathbf{w}}}^\star$ and $\mathcal{L}_\mathbf{w}$ given in Section~\ref{sect2}. The material derivative at $t=0$ can be computed by~\cite{hiptmair2013shape}
    \begin{eqnarray}
        \begin{aligned}
        \lim_{t\rightarrow0}\frac{\mathcal{J}_t(\bomega) - \mathcal{J}_0(\bomega)}{t}=&\lim_{t\rightarrow0}\frac{1}{t}\left[\int_{D_t}\mathbf{i}_{\mathbf{v}(D_t)}\bomega(D_t) - \int_D\mathbf{i}_{\mathbf{v}(D)}\bomega(D)\right]\\
        =&\lim_{t\rightarrow0}\frac{1}{t}\left[\int_D T_t^{\mathbf{w}^\star}\mathbf{i}_{\mathbf{v}(D_t)}\bomega(D_t)-\mathbf{i}_{\mathbf{v}(D)}\bomega(D)\right]\\
        =&\lim_{t\rightarrow0}\int_D T_t^{\mathbf{w}^\star}\frac{\mathbf{i}_{\mathbf{v}(D_t)}-\mathbf{i}_{\mathbf{v}(D)}}{t}\bomega(D_t)\\
        &+\lim_{t\rightarrow0}\int_D T_t^{\mathbf{w}^\star}\mathbf{i}_{\mathbf{v}(D)}\frac{\bomega(D_t) - \bomega(D)}{t}\\
        &+\lim_{t\rightarrow0}\frac{1}{t}\left[\int_D T_t^{\mathbf{w}^\star}\mathbf{i}_{\mathbf{v}(D)}\bomega(D)-\mathbf{i}_{\mathbf{v}(D)}\bomega(D)\right]\\
        =&\int_D \mathbf{i}_{\delta_\mathbf{w}\mathbf{v}}\bomega+\mathbf{i}_\mathbf{v}\delta_\mathbf{w}\bomega+\mathcal{L}_\mathbf{w}\mathbf{i}_\mathbf{v}\bomega.
        \end{aligned}
    \end{eqnarray}
    \ 
\end{proof}

The normal vector field $\mathbf{n}$ of the surface $\Gamma_t$ can be treated as a velocity field in $\mathbf{R}^d$ by smooth extension. Consider the surface functional $\mathcal{J}_t(\bomega) = \int_{\Gamma_t}\mathbf{i}_{\mathbf{n}}\bomega(\Gamma_t)$. According to Lemma~\ref{deW1}, the material derivative of $\mathcal{J}_t(\bomega)$ w.r.t. $\mathbf{w}$ at $t = 0$ is given by
\begin{eqnarray}\label{deW2}
    \lim_{t\rightarrow0}\frac{\mathcal{J}_t(\bomega) - \mathcal{J}_0(\bomega)}{t} = \mathcal{L}_\mathbf{w}\mathcal{J}_0(\bomega) + \int_\Gamma\delta_\mathbf{w}(\mathbf{i}_\mathbf{n}\bomega),
\end{eqnarray}
where $\delta_\mathbf{w}(\mathbf{i}_\mathbf{n}\bomega) = \mathbf{i}_{\delta_\mathbf{w}\mathbf{n}}\bomega + \mathbf{i}_\mathbf{n}\delta_\mathbf{w}\bomega$. More generally, we have the following corollary. 
\begin{corollary}\label{coro22}
Suppose that $F(\bomega;\mathbf{v},\mathbf{n}):\land^l\rightarrow\mathbb{C}$ is a linear functional of $\bomega$, with the dependence of $\mathbf{v}$ and $\mathbf{n}$ for $F$ being through the contraction operator $\mathbf{i}_{\mathbf{v}}$ and $\mathbf{i}_{\mathbf{n}}$ acting on the differential form $\bomega$. If $F$ is shape differentiable w.r.t. another velocity field $\mathbf{w}$, then the shape derivative operator $\delta_{\mathbf{w}}$ applied to $F$ is given by
\begin{eqnarray}\label{deW3}
\begin{aligned}
    \delta_{\mathbf{w}}F(\bomega;\mathbf{v},\mathbf{n}) =& F(\delta_{\mathbf{w}}\bomega;\mathbf{v},\mathbf{n}) + F(\bomega;\delta_{\mathbf{w}}\mathbf{v},\mathbf{n}) + F(\bomega;\mathbf{v},\delta_{\mathbf{w}}\mathbf{n})\\
    =&\delta_{\mathbf{w}}^{\bomega}F(\bomega;\mathbf{v},\mathbf{n}) + \delta_{\mathbf{w}}^\mathbf{v}F(\bomega;\mathbf{v},\mathbf{n}) + \delta_{\mathbf{w}}^{\mathbf{n}}F(\bomega;\mathbf{v},\mathbf{n}).
\end{aligned}
\end{eqnarray}
\end{corollary}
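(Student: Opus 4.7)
The plan is to exploit the fact that the hypotheses make $F$ effectively trilinear in its three arguments $(\bomega, \mathbf{v}, \mathbf{n})$. Linearity in $\bomega$ is assumed outright; since the dependence on $\mathbf{v}$ and $\mathbf{n}$ is mediated only by the contraction operators $\mathbf{i}_\mathbf{v}$ and $\mathbf{i}_\mathbf{n}$, and these satisfy $\mathbf{i}_{a\mathbf{v}_1 + b\mathbf{v}_2} = a\mathbf{i}_{\mathbf{v}_1} + b\mathbf{i}_{\mathbf{v}_2}$ (and similarly in the $\mathbf{n}$-slot), the functional $F$ is in fact linear in each of its three slots. Identity~\eqref{deW3} is then nothing but the Leibniz rule for the shape derivative of a trilinear expression whose three entries all vary with the shape.

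I would implement this by a standard telescoping expansion. Denoting by a subscript $t$ the quantities obtained after deforming by $T^\mathbf{w}_t$, trilinearity lets us rewrite
\begin{equation*}
F(\bomega_t; \mathbf{v}_t, \mathbf{n}_t) - F(\bomega; \mathbf{v}, \mathbf{n}) = F(\bomega_t - \bomega; \mathbf{v}_t, \mathbf{n}_t) + F(\bomega; \mathbf{v}_t - \mathbf{v}, \mathbf{n}_t) + F(\bomega; \mathbf{v}, \mathbf{n}_t - \mathbf{n}).
\end{equation*}
Dividing by $t$ and letting $t\to 0$, the three summands converge respectively to $F(\delta_\mathbf{w}\bomega;\mathbf{v},\mathbf{n})$, $F(\bomega;\delta_\mathbf{w}\mathbf{v},\mathbf{n})$, and $F(\bomega;\mathbf{v},\delta_\mathbf{w}\mathbf{n})$, directly from the definitions of the shape derivative of a differential form and of a velocity field.

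The main subtlety lies in justifying that the limit commutes with $F$ in the first summand, where all three slots vary simultaneously: one must argue $F(\bomega_t-\bomega;\mathbf{v}_t,\mathbf{n}_t)/t \to F(\delta_\mathbf{w}\bomega;\mathbf{v},\mathbf{n})$, which is a joint-continuity statement for the trilinear map on bounded sets. This follows from the bounded-linearity of $F$ in $\bomega$ combined with the smoothness of $t\mapsto\mathbf{v}_t$ and $t\mapsto\mathbf{n}_t$. A clean way to formalize it is to pull everything back to the reference configuration $D$ via $(T^\mathbf{w}_t)^\star$, so that the three arguments live in fixed function spaces and the multilinearity becomes literal; differentiation at $t=0$ is then elementary. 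As a sanity check, specialising to $F(\bomega;\mathbf{n}) = \int_\Gamma \mathbf{i}_\mathbf{n}\bomega$ reproduces the integrand of~\eqref{deW2} via Lemma~\ref{lemma21}, and inserting additional contractions into the $\mathbf{v}$-slot is handled in exactly the same manner, confirming that no further structural input is required beyond multilinearity.
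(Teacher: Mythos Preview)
Your proposal is correct and follows essentially the same route the paper takes: the paper states the corollary without a separate proof, presenting it as the immediate generalisation of the telescoping computation already carried out in the proof of Lemma~\ref{lemma21} (and in the line $\delta_\mathbf{w}(\mathbf{i}_\mathbf{n}\bomega)=\mathbf{i}_{\delta_\mathbf{w}\mathbf{n}}\bomega+\mathbf{i}_\mathbf{n}\delta_\mathbf{w}\bomega$ preceding the corollary). Your trilinearity-plus-telescoping argument is exactly that computation abstracted to a general $F$, so there is no substantive difference.
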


Now we proceed to derive the recurrence formulas for the shape derivatives of scattered fields $\bomega$ up to any order $N$. In particular, for the purpose of shape perturbation analysis, the normal velocity field~\eqref{veloField2} is sufficient to represent general velocity fields, since the tangential components of a general velocity field on $\Gamma$ do not affect the shape of the boundary~\cite{sokolowski1992introduction}. To ease the discussion, we first assume all velocity fields are restricted to the form
\begin{eqnarray}\label{veloField1}
    \mathbf{v}(\mathbf{x}) = v\mathbf{n}(\mathbf{x}),\qquad \mathbf{x}\in\Gamma,
\end{eqnarray}
where $v$ is a constant. This assumption ensures that $\delta_\mathbf{v}\mathbf{n}\equiv\vec{0}$ \cite{nedelec2001acoustic}. We then extend the conclusion to the general case:
\begin{eqnarray}\label{veloField2}
    \mathbf{v}(\mathbf{x}) = v(\mathbf{x})\mathbf{n}(\mathbf{x}),\quad \mathbf{x}\in\Gamma.
\end{eqnarray}

\subsection{Dirichlet boundary}\label{DiriBound}

For a scattering problem with Dirichlet boundary conditions, also known as the sound soft boundary condition in acoustics and the perfect electric conductor (PEC) condition in electromagnetics, one can rewrite equations~\eqref{EqD}-\eqref{EqD3} as a system of first order equations
\begin{eqnarray}\label{EqS}
\left\{
\begin{aligned}
    &\mathbf{d}\brho + (-1)^l\star_{k^2}\bomega = \mathbf{0}&&{\rm in}\quad\Omega^e,\\
    &\star_{\alpha^{-1}}\brho - (-1)^{(l+1)(d-1)}\mathbf{d}\bomega=\mathbf{0}&&{\rm in}\quad\Omega^e,\\
    &\star^\Gamma\mathbf{Tr}^{\mathcal{D}}(\bomega)=\star^\Gamma\mathbf{Tr}^{\mathcal{D}}(\bphi) = \star^\Gamma\mathbf{i}_\mathbf{n}\star\bphi&&{\rm on}\quad \Gamma,\\
    &\mathcal{F}(\brho,\bomega) = \mathbf{0}&&{\rm on}\quad \Gamma_R,
\end{aligned}
\right.
\end{eqnarray}
where $\brho$ is a $(d-l-1)-$form, and the metric $\alpha^{-1}$ is the inverse metric of $\alpha$, satisfying:
\begin{eqnarray}
    \brho = \star_\alpha\mathbf{d}\bomega\quad\Leftrightarrow\quad \star_{\alpha^{-1}}\brho = (-1)^{(l+1)(d-1)}\mathbf{d}\bomega.
\end{eqnarray}
The third equation in~\eqref{EqS} represents the Dirichlet boundary condition, where we denote $\mathbf{Tr}^{\mathcal{D}}: = \mathbf{i}_\mathbf{n}\star$ the Dirichlet trace operator. It should be noted that if the metric of the Hodge-$\star$ operator is the standard metric, we simply omit the subscript on $\star$. If $\star$ is defined on the surface $\Gamma$, we denote it by $\star^{\Gamma}:\land^{l}\rightarrow\land^{d-l-1}$. The incident field is given by $\bphi$. The last equation in~\eqref{EqS} represents the transparent boundary condition on $\Gamma_R$, which is fixed during the derivation of the shape derivative.

Let us introduce the trace operator decomposition, which decompose the restriction of $\bomega$ on $\Gamma$ as
\begin{eqnarray}\label{separate}
\bomega|_\Gamma = \star^\Gamma\mathbf{Tr}^{\mathcal{D}}(\bomega) + \star\mathbf{i}_\mathbf{n}\star\mathbf{i}_\mathbf{n}\bomega.
\end{eqnarray}
This is an orthogonal decomposition of $\bomega$ into the normal and tangential components relative to $\Gamma$. Specifically, for a scalar field $u$ and a vector field $\mathbf{E}$, the corresponding decompositions are:
\begin{eqnarray}
    u|_\Gamma = \mathbf{n}\cdot\mathbf{n}u - \mathbf{n}\times\mathbf{n}u,\quad\mathbf{E}|_\Gamma = -\mathbf{n}\times\mathbf{n}\times\mathbf{E} + \mathbf{n}(\mathbf{n}\cdot\mathbf{E}).
\end{eqnarray}

The recurrence formula for the shape derivatives of $\bomega$ under the Dirichlet boundary condition is given by the following theorem.
\begin{theorem}\label{ThemD}
    Suppose that $\bomega$ is the solution of equation~\eqref{EqS}. Let $\delta_{\mathbf{v}_{[N]}}\bomega$ be the $N$th order shape derivative w.r.t $\mathbf{v}_{[N]}:=[\mathbf{v}_1,\dots,\mathbf{v}_{N}]$ for $N=0,1,\dots$, and $\delta_{\mathbf{v}_{[N]}}\bomega=\bomega$ for $N=0$. Then the $N+1$th order shape derivative $\delta_{\mathbf{v}_{[N+1]}}\bomega$ satisfies the same equation~\eqref{EqS}, except that the boundary condition on $\Gamma$ is replaced by
    \begin{eqnarray}
        \begin{aligned}
        \star^\Gamma\mathbf{Tr}^{\mathcal{D}}(\delta_{\mathbf{v}_{[N+1]}}\bomega)=&\delta_{\mathbf{v}_{N+1}}^{\bomega}\big(\star^\Gamma\mathbf{Tr}^\mathcal{D}(\delta_{\mathbf{v}_{[N]}}\bomega)\big) + \delta_{\mathbf{v}_{N+1}}^{\mathbf{n}}\big(\star^\Gamma\mathbf{Tr}^\mathcal{D}(\delta_{\mathbf{v}_{[N]}}\bomega)\big)\\
        &+\mathbf{i}_{\mathbf{v}_{N+1}}\mathbf{d}\big(\star^\Gamma\mathbf{Tr}^\mathcal{D}(\delta_{\mathbf{v}_{[N]}}\bomega)\big)\\
        &-\mathbf{i}_{\mathbf{v}_{N+1}}\mathbf{d}\star^\Gamma\mathbf{i}_\mathbf{n}\star\delta_{\mathbf{v}_{[N]}}\bomega - \star^\Gamma\mathbf{i}_{\delta_{\mathbf{v}_{N+1}}\mathbf{n}}\star\delta_{\mathbf{v}_{[N]}}\bomega,
        \end{aligned}
    \end{eqnarray}
    where  $\star^\Gamma\mathbf{Tr}^\mathcal{D}(\delta_{\mathbf{v}_{[N]}}\bomega)$ is the Dirichlet boundary condition  for $\delta_{\mathbf{v}_{[N]}}\bomega$ on $\Gamma$. 
\end{theorem}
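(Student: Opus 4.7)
The natural approach is induction on $N$, with the base $N=0$ being trivial since $\bomega$ itself solves~\eqref{EqS}. In the inductive step I assume that $(\delta_{\mathbf{v}_{[N]}}\bomega,\delta_{\mathbf{v}_{[N]}}\brho)$ solves~\eqref{EqS} with some previously derived Dirichlet trace $g_N$ on $\Gamma$, and verify that $\delta_{\mathbf{v}_{[N+1]}}\bomega:=\delta_{\mathbf{v}_{N+1}}\delta_{\mathbf{v}_{[N]}}\bomega$ solves the same system with the Dirichlet trace claimed in the theorem. The two interior equations are preserved because $\mathbf{d}$, $\star_{\alpha^{-1}}$ and $\star_{k^2}$ depend only on the ambient metric and the material parameters, both of which are held fixed under a shape perturbation supported in a strip around $\Gamma$; hence $\delta_{\mathbf{v}_{N+1}}$ commutes with these operators on the interior of $\Omega^e$. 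The transparent condition on $\Gamma_R$ is unchanged for the same reason, since $\Gamma_R$ is fixed and $\mathcal{F}$ is shape-independent.

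The heart of the proof is the new Dirichlet condition on $\Gamma$. The plan is to shape-differentiate the level-$N$ Dirichlet identity, which holds on the moving boundary $\Gamma_{t_{N+1}}$, then rearrange to read off $\star^{\Gamma}\mathbf{Tr}^{\mathcal{D}}(\delta_{\mathbf{v}_{[N+1]}}\bomega)$ on the unperturbed $\Gamma$. Three ingredients will be combined: the surface material-derivative formula~\eqref{deW2} derived from Lemma~\ref{lemma21}, which decomposes the derivative of a moving-boundary functional into an Eulerian derivative plus a Lie-derivative transport term; Cartan's formula~\eqref{Cartan}, expanding $\mathcal{L}_{\mathbf{v}_{N+1}}=\mathbf{i}_{\mathbf{v}_{N+1}}\mathbf{d}+\mathbf{d}\mathbf{i}_{\mathbf{v}_{N+1}}$, where on a tangential form the $\mathbf{d}\mathbf{i}_{\mathbf{v}_{N+1}}$ piece simplifies because $\mathbf{v}_{N+1}$ is normal to $\Gamma$ so $\mathbf{i}_{\mathbf{v}_{N+1}}$ annihilates tangential forms; and Corollary~\ref{coro22} applied to the boundary functional $F(\bomega;\mathbf{n})=\star^\Gamma\mathbf{i}_\mathbf{n}\star\bomega$, which splits the Eulerian derivative into its form-variation piece $\delta^\bomega$ and its normal-vector-variation piece $\delta^\mathbf{n}$. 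These splittings account for the first, second and fifth terms of the recurrence. The remaining Lie-derivative contributions are handled with the orthogonal trace decomposition~\eqref{separate}: rewriting $\delta_{\mathbf{v}_{[N]}}\bomega|_\Gamma$ as the sum of its tangential and normal parts is what makes the third and fourth terms emerge with opposite signs, acting respectively on $\star^\Gamma\mathbf{Tr}^{\mathcal{D}}(\delta_{\mathbf{v}_{[N]}}\bomega)$ and on $\star^\Gamma\mathbf{i}_\mathbf{n}\star\delta_{\mathbf{v}_{[N]}}\bomega$.

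The main obstacle will be the careful bookkeeping of geometric dependencies: the form $\delta_{\mathbf{v}_{[N]}}\bomega$ depends on $\Gamma$ through the level-$N$ boundary value problem, while the extraction operator $\star^\Gamma\mathbf{i}_\mathbf{n}\star$ depends on $\Gamma$ through both $\mathbf{n}$ and the induced surface metric. Attributing each piece of the material derivative to exactly one of the three categories (form variation, normal-vector variation, or surface transport) is delicate but mechanical. I will first carry the derivation out under the simplifying assumption~\eqref{veloField1} of constant normal speed, in which case $\delta_{\mathbf{v}_{N+1}}\mathbf{n}=\vec 0$ so the second and fifth terms drop out and the structure of the argument is transparent; the extension to the general normal velocity~\eqref{veloField2} is then obtained by retaining the $\delta_{\mathbf{v}_{N+1}}\mathbf{n}$ terms that were previously annihilated, yielding the full five-term recurrence.
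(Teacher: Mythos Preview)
Your plan is sound but takes a genuinely different route from the paper. The paper never differentiates the boundary identity in isolation; it differentiates the full variational identity
\[
\mathcal{M}(\brho,\bomega;\btau)=\int_{\Gamma^-}\bigl(\star^\Gamma\mathbf{Tr}^\mathcal{D}(\bomega)+\star\mathbf{i}_\mathbf{n}\star\mathbf{i}_\mathbf{n}\bomega\bigr)\wedge\btau+\int_{\Gamma_R^+}\bomega\wedge\btau
\]
as a \emph{domain} functional (Lemma~\ref{lemma21}, not the surface formula~\eqref{deW2}), so that interior equation and Dirichlet trace emerge together. The Lie-derivative piece $\mathcal{L}_{\mathbf{v}_{N+1}}\mathcal{M}$ collapses via Stokes to a $\Gamma$-integral containing $\star_{\alpha^{-1}}\brho$, $\mathbf{i}_{\mathbf{v}_{N+1}}\btau$ and $\mathbf{d}\btau$, and it is the cancellation of these against the differentiated right-hand side---through the three algebraic identities recorded in~\eqref{Betadefination}---that yields the clean recurrence. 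Your route (interior by commutativity, then material-differentiate only the trace identity on the moving $\Gamma_t$) is lighter and should also succeed, but the mechanism is different: you never generate the $\mathbf{d}\btau$ and $\mathbf{i}_{\mathbf{v}_{N+1}}\btau$ terms that dominate the paper's computation, and correspondingly you never need~\eqref{Betadefination}. What the paper's approach buys is a single derivation that simultaneously certifies the interior system; what yours buys is directness.

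There is one genuine omission in your outline. At level $N$ the datum $g_N=\star^\Gamma\mathbf{Tr}^\mathcal{D}(\delta_{\mathbf{v}_{[N]}}\bomega)$ is built from contractions $\mathbf{i}_{\mathbf{v}_1},\dots,\mathbf{i}_{\mathbf{v}_N}$, and since each $\mathbf{v}_j=v_j\mathbf{n}$ depends on $\Gamma$, Corollary~\ref{coro22} also produces terms $\delta_{\mathbf{v}_{N+1}}^{\mathbf{v}_j}g_N$ for $j=1,\dots,N$. These do not appear in the recurrence, so they must cancel. The paper handles this explicitly: see~\eqref{Eliminate} at second order and the $\mathbf{v}_{[j,N]}$ sums in~\eqref{Eqnord} at general order, where the inductive hypothesis applied to the modified sequence $[\mathbf{v}_1,\dots,\delta_{\mathbf{v}_{N+1}}\mathbf{v}_j,\dots,\mathbf{v}_N]$ makes the extra $\mathcal{M}$-terms and boundary terms cancel in pairs. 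In your direct approach the same cancellation should occur---differentiating both sides of the level-$N$ identity in the $\mathbf{v}_j$-slot gives matching contributions---but you must state and justify this. Under~\eqref{veloField1} one has $\delta_{\mathbf{v}_{N+1}}\mathbf{v}_j=v_j\,\delta_{\mathbf{v}_{N+1}}\mathbf{n}=0$ and the issue disappears, which is presumably why you overlooked it; for the general normal velocity~\eqref{veloField2} it is not automatic and your plan is silent on it.
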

\begin{proof}

Given a test function $\btau\in\land^{d-l-1}$, the second equation in ~\eqref{EqS} gives 
\begin{eqnarray}
\int_{\Omega^e}\star_{\alpha^{-1}}\brho\land\btau - (-1)^{(l+1)(d-1)}\mathbf{d}\bomega\land\btau = 0.\label{EqW1}
\end{eqnarray}
Let $\Gamma^-$ and $\Gamma_R^+$ be the oriented surfaces directed towards the interior of $\Omega$ and the exterior of $B_R$, respectively. By applying integration by parts and Stokes' theorem, equation~\eqref{EqW1} becomes:
\begin{eqnarray}\label{zeroorder}
\begin{aligned}
    &\int_{\Omega^e}\star_{\alpha^{-1}}\brho\land\btau - (-1)^{(l+1)d} \int_{\Omega^e}\bomega\land\mathbf{d}\btau\\
    =& (-1)^{(l+1)(d-1)}\Big[\int_{\Gamma^-}\bomega\land\btau + \int_{\Gamma_R^+}\bomega\land\btau\Big].
\end{aligned}
\end{eqnarray}
Denote $\mathcal{M}(\brho,\bomega;\btau)$ the left-hand side of equation~\eqref{zeroorder}, i.e.
\begin{eqnarray}\label{M2}
    \mathcal{M}(\brho,\bomega;\btau): = \int_{\Omega^e}\star_{\alpha^{-1}}\brho\land\btau - (-1)^{(l+1)d} \int_{\Omega^e}\bomega\land\mathbf{d}\btau.
\end{eqnarray}
By combining decomposition~\eqref{separate}, equation~\eqref{zeroorder} can be rewritten as
\begin{eqnarray}\label{weakform0}
\begin{aligned}
    \mathcal{M}(\brho,\bomega;\btau)= &(-1)^{(l+1)(d-1)}\Big[\int_{\Gamma^-}\star^\Gamma\mathbf{Tr}^{\mathcal{D}}(\bphi)\land\btau\\
    &+ \int_{\Gamma^-}\star\mathbf{i}_\mathbf{n}\star\mathbf{i}_\mathbf{n}\bomega\land\btau + \int_{\Gamma_R^+}\bomega\land\btau\Big].
\end{aligned}
\end{eqnarray}
Suppose $\Omega^e$ is a moving domain w.r.t. the velocity field $\mathbf{v}$ given by equation~\eqref{veloField1}. Since $\mathcal{M}(\brho,\bomega;\btau)$ is a domain functional of $\Omega^e$, according to Lemma~\ref{lemma21} and Cartan's formula~\eqref{Cartan}, the material derivative of $\mathcal{M}(\brho,\bomega;\btau)$ w.r.t. $\mathbf{v}$ is given by
\begin{eqnarray}\label{SG1l}
    \begin{aligned}
        \delta_\mathbf{v}\mathcal{M}(\brho,\bomega;\btau) =& \mathcal{M}(\delta_\mathbf{v}\brho,\delta_\mathbf{v}\bomega;\btau) + \mathcal{L}_\mathbf{v}\mathcal{M}(\brho,\bomega;\btau)\\
    =&\mathcal{M}(\delta_\mathbf{v}\brho,\delta_\mathbf{v}\bomega;\btau) + \int_{\Gamma^-}\mathbf{i}_\mathbf{v}\Big(\star_{\alpha^{-1}}\brho\land\btau - (-1)^{(l+1)d}\bomega\land\mathbf{d}\btau\Big).
    \end{aligned}
\end{eqnarray}
On the other hand, given the explicit form of the velocity field in equation~\eqref{veloField1}, taking the material derivative of equation~\eqref{weakform0} and making use of the facts that $\delta_{\mathbf{v}}\bphi = \mathbf{0}$ and $\mathbf{i}_{\mathbf{v}}\mathbf{d}\int_{\Gamma_R}(\cdot) = 0$, we obtain 
\begin{eqnarray}\label{SG1}
\begin{aligned}
    \delta_\mathbf{v}\mathcal{M}(\brho,\bomega;\btau)=& (-1)^{(l+1)(d-1)}\Big[\int_{\Gamma^-}\mathbf{i}_\mathbf{v}\mathbf{d}\big((\star^\Gamma\mathbf{Tr}^\mathcal{D}(\bphi) + \star\mathbf{i}_\mathbf{n}\star\mathbf{i}_\mathbf{n}\bomega)\land\btau\big)\\
    &+ \int_{\Gamma^-}\star\mathbf{i}_\mathbf{n}\star\mathbf{i}_\mathbf{n}\delta_\mathbf{v}\bomega\land\btau + \int_{\Gamma_R^+}\delta_\mathbf{v}\bomega\land\btau\Big].
\end{aligned}
\end{eqnarray}
Combining equations~\eqref{SG1l},~\eqref{SG1} and the distributive law~\eqref{distrlaw} yields
\begin{eqnarray}\label{Eq1ord}
    \begin{aligned}
    \mathcal{M}(\delta_\mathbf{v}\brho,\delta_\mathbf{v}\bomega;\btau)
    =&(-1)^{(l+1)(d-1)}\int_{\Gamma^-}\big(\mathbf{i}_\mathbf{v}\mathbf{d}\star^\Gamma\mathbf{Tr}^\mathcal{D}(\bphi) + \mathbf{i}_\mathbf{v}\mathbf{d}\star\mathbf{i}_\mathbf{n}\star\mathbf{i}_\mathbf{n}\bomega) - \mathbf{i}_\mathbf{v}\mathbf{d}\bomega\big)\land\btau\\
    & + (-1)^{(l+1)d}\int_{\Gamma^-}\big(-\mathbf{d}\bomega + \mathbf{d}\star^\Gamma\mathbf{Tr}^{\mathcal{D}}(\bphi) + \mathbf{d}\star\mathbf{i}_\mathbf{n}\star\mathbf{i}_\mathbf{n}\bomega\big)\land\mathbf{i}_\mathbf{v}\btau\\
    & + (-1)^{(l+1)d}\int_{\Gamma^-}\mathbf{i}_\mathbf{v}\big(\bomega - \star^\Gamma\mathbf{Tr}^\mathcal{D}(\bphi) - \star\mathbf{i}_\mathbf{n}\star\mathbf{i}_\mathbf{n}\bomega\big)\land\mathbf{d}\btau\\
    & + (-1)^{(l+1)(d-1)}\int_{\Gamma^-}\big(\bomega - \star^\Gamma\mathbf{Tr}^\mathcal{D}(\bphi) - \star\mathbf{i}_\mathbf{n}\star\mathbf{i}_\mathbf{n}\bomega\big)\land\mathbf{i}_\mathbf{v}\mathbf{d}\btau\\
    &+(-1)^{(l+1)(d-1)}\Big[\int_{\Gamma^-}\star\mathbf{i}_\mathbf{n}\star\mathbf{i}_\mathbf{n}\delta_\mathbf{v}\bomega\land\btau + \int_{\Gamma_R^+}\delta_\mathbf{v}\bomega\land\btau\Big]\\
    =&(-1)^{(l+1)(d-1)}\Big[\int_{\Gamma^-}\big(\mathbf{i}_\mathbf{v}\mathbf{d}\star^\Gamma\mathbf{Tr}^{\mathcal{D}}(\bphi) -\mathbf{i}_\mathbf{v}\mathbf{d}\star^\Gamma\mathbf{i}_\mathbf{n}\star\bomega\big)\land\btau\\
    &+\int_{\Gamma^-}\star\mathbf{i}_\mathbf{n}\star\mathbf{i}_\mathbf{n}\delta_\mathbf{v}\bomega\land\btau + \int_{\Gamma_R^+}\delta_\mathbf{v}\bomega\land\btau\Big],
    \end{aligned}
\end{eqnarray}
where the second step in equation~\eqref{Eq1ord} utilizes the following identities on $\Gamma$
\begin{eqnarray}
\left\{
    \begin{aligned}
        &-\mathbf{d}\bomega+ \mathbf{d}\star\mathbf{i}_\mathbf{n}\star\mathbf{i}_\mathbf{n}\bomega = -\mathbf{d}\star^\Gamma\mathbf{i}_{\mathbf{n}}\star\bomega,\\
        &\mathbf{i}_{\mathbf{n}}\star\Big(\big(\mathbf{d}\star^\Gamma\mathbf{Tr}(\bphi)-\mathbf{d}\star^\Gamma\mathbf{i}_{\mathbf{n}}\star\bomega\big)\land\mathbf{i}_\mathbf{v}\btau\Big)=\mathbf{0},\\
        &\bomega - \star^{\Gamma}\mathbf{Tr}^{\mathcal{D}}(\bphi) - \star\mathbf{i}_{\mathbf{n}}\star\mathbf{i}_{\mathbf{n}}\bomega = \mathbf{0}.
    \end{aligned}
\right.
\end{eqnarray}
According to equations~\eqref{EqS} and ~\eqref{weakform0}, the variational form~\eqref{Eq1ord} is equivalent to
\begin{eqnarray}\label{Dbd1}
\left\{
    \begin{aligned}
    &\mathbf{d}\delta_\mathbf{v}\brho + (-1)^l\star_{k^2}\delta_\mathbf{v}\bomega = \mathbf{0}&&{\rm in}\quad\Omega^e,\\
    &\star_\alpha^{-1}\delta_\mathbf{v}\brho - (-1)^{(l+1)(d-1)}\mathbf{d}\delta_\mathbf{v}\bomega=\mathbf{0}&&{\rm in}\quad\Omega^e,\\
    &\star^\Gamma\mathbf{Tr}^{\mathcal{D}}(\delta_\mathbf{v}\bomega)=\mathbf{i}_\mathbf{v}\mathbf{d}\star^\Gamma\mathbf{i}_\mathbf{n}\star\bphi -\mathbf{i}_\mathbf{v}\mathbf{d}\star^\Gamma\mathbf{i}_\mathbf{n}\star\bomega&&{\rm on}\quad \Gamma,\\
    &\mathcal{F}(\delta_\mathbf{v}\brho,\delta_\mathbf{v}\bomega) = \mathbf{0}&&{\rm on}\quad \Gamma_R,
    \end{aligned}
    \right.
\end{eqnarray}
which is the equation satisfied by the first order shape derivative $\delta_{\mathbf{v}}\bomega$. 

To derive the second order shape derivative, we further introduce another velocity field $\mathbf{w}$ and assume $\Gamma_t$ with $\Gamma_0 = \Gamma$ is a moving boundary given by~\eqref{movingboun} with $\mathbf{v}_1 = \mathbf{v}$ and $\mathbf{v}_2 = \mathbf{w}$. In order to calculate $\delta_\mathbf{w}\mathcal{M}(\delta_\mathbf{v}\brho,\delta_\mathbf{v}\bomega;\btau)$, we proceed in a manner similar to the derivation of the first order shape derivative. Based on Lemma~\ref{lemma21} and Corollary~\ref{coro22}, let us take the material derivative w.r.t. $\mathbf{w}$ on both sides of equation~\eqref{Eq1ord}:
\begin{eqnarray}\label{Eq2ord}
\begin{aligned}
    &\mathcal{M}(\delta_{[\mathbf{v},\mathbf{w}]}\brho,\delta_{[\mathbf{v},\mathbf{w}]}\bomega;\btau) + \mathcal{M}(\delta_{\delta_\mathbf{w}\mathbf{v}}\brho,\delta_{\delta_\mathbf{w}\mathbf{v}}\bomega;\btau) + \mathcal{L}_\mathbf{w}\mathcal{M}(\delta_\mathbf{v}\brho,\delta_\mathbf{v}\bomega;\btau)\\
    =&(-1)^{(l+1)(d-1)}\Big[\int_{\Gamma^-}\mathbf{i}_\mathbf{w}\mathbf{d}\big((\star^\Gamma\mathbf{Tr}^\mathcal{D}(\delta_\mathbf{v}\bomega) + \star\mathbf{i}_\mathbf{n}\star\mathbf{i}_\mathbf{n}\delta_\mathbf{v}\bomega)\land\btau\big)+\int_{\Gamma_R^+}\delta_{[\mathbf{v},\mathbf{w}]}\bomega\land\btau\\
    &+\int_{\Gamma^-}\delta_\mathbf{w}^{\bomega}(\star^\Gamma\mathbf{Tr}^\mathcal{D}(\delta_\mathbf{v}\bomega))\land\btau + \star\mathbf{i}_\mathbf{n}\star\mathbf{i}_\mathbf{n}\delta_{[\mathbf{v},\mathbf{w}]}\bomega\land\btau\\
    &+\int_{\Gamma^-}\star^{\Gamma}\mathbf{Tr}^\mathcal{D}(\delta_{\delta_\mathbf{w}\mathbf{v}}\bomega)\land\btau + \int_{\Gamma^-}\star\mathbf{i}_\mathbf{n}\star\mathbf{i}_\mathbf{n}\delta_{\delta_\mathbf{w}\mathbf{v}}\bomega\land\btau + \int_{\Gamma_R^+}\delta_{\delta_\mathbf{w}\mathbf{v}}\bomega\land\btau\Big].
\end{aligned}
\end{eqnarray}
Considering that $\star^\Gamma\mathbf{Tr}^{\mathcal{D}}(\delta_\mathbf{v}\bomega)$ satisfies the third equation in~\eqref{Dbd1}, and the incident field $\bphi$ is independent of the boundary perturbation, we obtain
\begin{eqnarray}\label{widedelta}
    \begin{aligned}
        \delta_\mathbf{w}^{\bomega}(\star^\Gamma\mathbf{Tr}^{\mathcal{D}}(\delta_\mathbf{v}\bomega)) =& \delta_\mathbf{w}^{\bomega}(\mathbf{i}_\mathbf{v}\mathbf{d}\star^\Gamma\mathbf{i}_\mathbf{n}\star\bphi -\mathbf{i}_\mathbf{v}\mathbf{d}\star^\Gamma\mathbf{i}_\mathbf{n}\star\bomega)\\
        =& -\mathbf{i}_\mathbf{v}\mathbf{d}\star^\Gamma\mathbf{i}_\mathbf{n}\star\delta_\mathbf{w}\bomega.
    \end{aligned}
\end{eqnarray}
In equation~\eqref{Eq2ord}, $\delta_\mathbf{w}\mathbf{v}$ can also be seen as a velocity field defined in $\mathbb{R}^d$. By replacing $\mathbf{v}$ with $\delta_\mathbf{w}\mathbf{v}$ in equation~\eqref{Eq1ord}, it yields
 \begin{eqnarray}\label{Eliminate}
 \begin{aligned}
     \mathcal{M}(\delta_{\delta_\mathbf{w}\mathbf{v}}\brho,\delta_{\delta_\mathbf{w}\mathbf{v}}\bomega;\btau)=&(-1)^{(l+1)(d-1)}\Big[\int_{\Gamma^-}\star^{\Gamma}\mathbf{Tr}^\mathcal{D}(\delta_{\delta_\mathbf{w}\mathbf{v}}\bomega)\land\btau\\
     &+ \int_{\Gamma^-}\star\mathbf{i}_\mathbf{n}\star\mathbf{i}_\mathbf{n}\delta_{\delta_\mathbf{w}\mathbf{v}}\bomega\land\btau + \int_{\Gamma_R^+}\delta_{\delta_\mathbf{w}\mathbf{v}}\bomega\land\btau\Big].
 \end{aligned}
 \end{eqnarray}
By plugging equation~\eqref{Eliminate} into equation~\eqref{Eq2ord}, one can eliminate the terms containing $\delta_{\mathbf{w}}\mathbf{v}$, and equation~\eqref{Eq2ord} is simplified as
\begin{eqnarray}\label{Betafunction}
\begin{aligned}
    &\mathcal{M}(\delta_{[\mathbf{v},\mathbf{w}]}\brho,\delta_{[\mathbf{v},\mathbf{w}]}\bomega;
    \btau)\\
    =&-\int_\Gamma\mathbf{i}_\mathbf{w}(\star_{\alpha^-1}\delta_\mathbf{v}\brho\land\btau - (-1)^{(l+1)d}\delta_\mathbf{v}\bomega\land d\btau)\\
    &+(-1)^{(l+1)(d-1)}\Big[\int_{\Gamma^-}\mathbf{i}_\mathbf{w}\mathbf{d}\big((\star^\Gamma\mathbf{Tr}^\mathcal{D}(\delta_\mathbf{v}\bomega) + \star\mathbf{i}_\mathbf{n}\star\mathbf{i}_\mathbf{n}\delta_\mathbf{v}\bomega)\land\btau\big)\\
    &+\int_{\Gamma^-}\delta_\mathbf{w}^{\bomega}(\star^\Gamma\mathbf{Tr}^\mathcal{D}(\delta_\mathbf{v}\bomega))\land\btau+\int_{\Gamma^-}\star\mathbf{i}_\mathbf{n}\star\mathbf{i}_\mathbf{n}\delta_{[\mathbf{v},\mathbf{w}]}\bomega\land\btau +\int_{\Gamma_R^+}\delta_{[\mathbf{v},\mathbf{w}]}\bomega\land\btau\Big].
\end{aligned}
\end{eqnarray}
Now let us make use of the following identities on $\Gamma$:
\begin{eqnarray}\label{Betadefination}
    \left\{
    \begin{aligned}
        &-\mathbf{d}\delta_{\mathbf{v}}\bomega+ \mathbf{d}\star\mathbf{i}_\mathbf{n}\star\mathbf{i}_\mathbf{n}\delta_{\mathbf{v}}\bomega = -\mathbf{d}\star^\Gamma\mathbf{i}_{\mathbf{n}}\star\delta_{\mathbf{v}}\bomega,\\
        &\mathbf{i}_{\mathbf{n}}\star\Big(\big(\mathbf{d}\star^\Gamma\mathbf{Tr}^{\mathcal{D}}(\delta_{\mathbf{v}}\bomega)-\mathbf{d}\star^\Gamma\mathbf{i}_{\mathbf{n}}\star\delta_{\mathbf{v}}\bomega\big)\land\mathbf{i}_\mathbf{w}\btau\Big)=\mathbf{0},\label{Betadefination1}\\
        &\delta_{\mathbf{v}}\bomega - \star^\Gamma\mathbf{Tr}^{\mathcal{D}}(\delta_{\mathbf{v}}\bomega) - \star\mathbf{i}_{\mathbf{n}}\star\mathbf{i}_{\mathbf{n}}\delta_{\mathbf{v}}\bomega = \mathbf{0},\label{Betadefination2}
    \end{aligned}
    \right.
\end{eqnarray}
and introduce the boundary term on $\Gamma$
\begin{eqnarray}
    \begin{aligned}
        \star^\Gamma\mathbf{Tr}^{\mathcal{D}}(\delta_{[\mathbf{v},\mathbf{w}]}\bomega)=& \mathbf{i}_\mathbf{w}\mathbf{d}\mathbf{i}_\mathbf{v}\mathbf{d}\star^\Gamma\mathbf{i}_\mathbf{n}\star\bphi - \mathbf{i}_\mathbf{w}\mathbf{d}\mathbf{i}_\mathbf{v}\mathbf{d}\star^\Gamma\mathbf{i}_\mathbf{n}\star\bomega\qquad\\
      &-\mathbf{i}_\mathbf{w}\mathbf{d}\star^\Gamma\mathbf{i}_\mathbf{n}\star\delta_\mathbf{v}\bomega -\mathbf{i}_\mathbf{v}\mathbf{d}\star^\Gamma\mathbf{i}_\mathbf{n}\star\delta_\mathbf{w}\bomega.
    \end{aligned}
\end{eqnarray}
One can then rewrite equation~\eqref{Betafunction} as
\begin{eqnarray}
\begin{aligned}
    \mathcal{M}(\delta_{[\mathbf{v},\mathbf{w}]}\brho,\delta_{[\mathbf{v},\mathbf{w}]}\bomega;
    \btau)=&(-1)^{(l+1)(d-1)}\Big[\int_{\Gamma^-}\star^\Gamma\mathbf{Tr}^{\mathcal{D}}(\delta_{[\mathbf{v},\mathbf{w}]}\bomega)\land\btau\\
    &+\int_{\Gamma^-}\star\mathbf{i}_\mathbf{n}\star\mathbf{i}_\mathbf{n}\delta_{[\mathbf{v},\mathbf{w}]}\bomega\land\btau +\int_{\Gamma_R^+}\delta_{[\mathbf{v},\mathbf{w}]}\bomega\land\btau\Big],
\end{aligned}
\end{eqnarray}
which implies that the second order shape derivative $\delta_{[\mathbf{v},\mathbf{w}]}\bomega$ satisfies the equation
\begin{eqnarray}
    \left\{
    \begin{aligned}
        &\mathbf{d}\delta_{[\mathbf{v},\mathbf{w}]}\brho + (-1)^l\star_{k^2}\delta_{[\mathbf{v},\mathbf{w}]}\bomega = \mathbf{0} &&{\rm in}\quad\Omega^e,\\
        &\star_\alpha^{-1}\delta_{[\mathbf{v},\mathbf{w}]}\brho - (-1)^{(l+1)(d-1)}\mathbf{d}\delta_{[\mathbf{v},\mathbf{w}]}\bomega=\mathbf{0}&&{\rm in}\quad\Omega^e,\\
        &\star^\Gamma\mathbf{i}_{\mathbf{n}}\star\delta_{[\mathbf{v},\mathbf{w}]}\bomega = \star^\Gamma\mathbf{Tr}^{\mathcal{D}}(\delta_{[\mathbf{v},\mathbf{w}]}\bomega)&&{\rm on}\quad\Gamma,\\
        &\mathcal{F}(\delta_{[\mathbf{v},\mathbf{w}]}\brho,\delta_{[\mathbf{v},\mathbf{w}]}\bomega)  = \mathbf{0} &&{\rm on}\quad \Gamma_R.
    \end{aligned}
    \right.
\end{eqnarray}

We now inductively derive the higher order shape derivatives w.r.t a sequence of velocity fields $\mathbf{v}_{[N+1]}: = [\mathbf{v}_1,\mathbf{v}_2,\dots,\mathbf{v}_{N+1}]$. Suppose that the boundary condition of $\delta_{\mathbf{v}_{[N]}}\bomega$ on $\Gamma$ has been given by $\star^\Gamma\mathbf{Tr}^\mathcal{D}(\delta_{\mathbf{v}_{[N]}}\bomega)$, which means
\begin{eqnarray}\label{Eqns1ord}
\begin{aligned}
        \mathcal{M}(\delta_{\mathbf{v}_{[N]}}\brho,\delta_{\mathbf{v}_{[N]}}\bomega;\btau) =& (-1)^{(l+1)(d-1)}\Big[\int_{\Gamma^-} \star^\Gamma\mathbf{Tr}^\mathcal{D}(\delta_{\mathbf{v}_{[N]}}\bomega)\land\btau\\
        &+\int_{\Gamma^-}\star\mathbf{i}_\mathbf{n}\star\mathbf{i}_{\mathbf{n}}\bomega\land\btau +\int_{\Gamma_R^+}\delta_{\mathbf{v}_{[N]}}\bomega\land\btau\Big].
\end{aligned}
\end{eqnarray}
 To derive the equation satisfied by $\delta_{\mathbf{v}_{[N+1]}}\bomega$, let us take the material derivative w.r.t. $\mathbf{v}_{N+1}$ in equation~\eqref{Eqns1ord}. Following the same procedure as deriving the second order shape derivative in equation~\eqref{Eq2ord}, we obtain
    \begin{eqnarray}\label{Eqnord}
        \begin{aligned}
            & \mathcal{M}(\delta_{\mathbf{v}_{[N+1]}}\brho,\delta_{\mathbf{v}_{[N+1]}}\bomega;\btau)\\
            =& - \int_{\Gamma^-}\mathbf{i}_{\mathbf{v}_{N+1}}\Big(\big(\star_{\alpha^{-1}}\delta_{\mathbf{v}_{[N]}}\brho - (-1)^{(l+1)(d-1)}\mathbf{d}\delta_{\mathbf{v}_{[N]}}\bomega\big)\land\btau\Big)\\
            & + (-1)^{(l+1)(d-1)}\Big[\int_{\Gamma^-}\delta_{\mathbf{v}_{N+1}}^{\bomega}\big(\star^\Gamma\mathbf{Tr}^\mathcal{D}(\delta_{\mathbf{v}_{[N]}}\bomega)\land\btau\big) + \mathbf{i}_{\mathbf{v}_{N+1}}\mathbf{d}\big(\star^\Gamma\mathbf{Tr}^\mathcal{D}(\delta_{\mathbf{v}_{[N]}}\bomega)\land\btau\big)\Big]\\
            & + (-1)^{(l+1)(d-1)}\Big[\int_{\Gamma^-}\mathbf{i}_{\mathbf{v}_{N+1}}\mathbf{d}(\star\mathbf{i}_\mathbf{n}\star\mathbf{i}_\mathbf{n}\delta_{\mathbf{v}_{[N]}}\bomega\land\btau) + \star\mathbf{i}_\mathbf{n}\star\mathbf{i}_\mathbf{n}\delta_{\mathbf{v}_{[N+1]}}\bomega\land\btau\Big]\\
            &+(-1)^{(l+1)(d-1)}\int_{\Gamma_R^+}\delta_{\mathbf{v}_{[N+1]}}\bomega\land\btau\\
            &- \sum_{j = 1}^N\mathcal{M}(\delta_{\mathbf{v}_{[j,N]}}\brho,\delta_{\mathbf{v}_{[j,N]}}\bomega;\btau) + (-1)^{(l+1)(d-1)}\sum_{j=1}^{N}\int_{\Gamma^-}\star^\Gamma\mathbf{Tr}^\mathcal{D}(\delta_{\mathbf{v}_{[j,N]}}\bomega)\land\btau\\
            &+(-1)^{(l+1)(d-1)}\sum_{j=1}^{N}\Big[\int_{\Gamma^-}\star\mathbf{i}_\mathbf{n}\star\mathbf{i}_\mathbf{n}\delta_{\mathbf{v}_{[j,N]}}\bomega\land\btau + \int_{\Gamma_R^+}\delta_{\mathbf{v}_{[j,N]}}\bomega\land\btau\Big],
        \end{aligned}
    \end{eqnarray}
    where we denote $\mathbf{v}_{[j,N]} = [\mathbf{v}_1,\dots,\mathbf{v}_{j-1},\delta_{\mathbf{v}_{N+1}}\mathbf{v}_j,\mathbf{v}_{j+1},\dots, \mathbf{v}_N]$.
    By assumption, it holds the identities 
    \begin{eqnarray}
        \begin{aligned}
            \mathcal{M}(\delta_{\mathbf{v}_{[j,N]}}\brho,\delta_{\mathbf{v}_{[j,N]}}\bomega;\btau)=&(-1)^{(l+1)(d-1)}\int_{\Gamma^-}\star^\Gamma\mathbf{Tr}^\mathcal{D}(\delta_{\mathbf{v}_{[j,N]}}\bomega)\land\btau\\
            &+(-1)^{(l+1)(d-1)}\Big[\int_{\Gamma^-}\star\mathbf{i}_\mathbf{n}\star\mathbf{i}_\mathbf{n}\delta_{\mathbf{v}_{[j,N]}}\bomega\land\btau + \int_{\Gamma_R^+}\delta_{\mathbf{v}_{[j,N]}}\bomega\land\btau\Big]
        \end{aligned}
    \end{eqnarray}
    for $j = 1,\dots,N$. The integrals with integrands $\mathbf{i}_{\mathbf{v}_{[N+1]}}\btau$, $\mathbf{d}\btau$ and $\mathbf{i}_{\mathbf{v}_{[N+1]}}\mathbf{d}\btau$ can be eliminated by the similar identities as equations in ~\eqref{Betadefination}. 
    Then equation~\eqref{Eqnord} can be rewritten as
    \begin{eqnarray}
        \begin{aligned}
            \mathcal{M}(\delta_{\mathbf{v}_{[N+1]}}\brho,\delta_{\mathbf{v}_{[N+1]}}\bomega;\btau)
            =&(-1)^{(l+1)(d-1)}\Big[\int_{\Gamma^-}\star^\Gamma\mathbf{Tr}^{\mathcal{D}}(\delta_{\mathbf{v}_{[N+1]}}\bomega)\land\btau\\
            &+\int_{\Gamma^-}\star\mathbf{i}_\mathbf{n}\star\mathbf{i}_\mathbf{n}\delta_{\mathbf{v}_{[N+1]}}\bomega\land\btau +\int_{\Gamma_R^+}\delta_{\mathbf{v}_{[N+1]}}\bomega\land\btau\Big],
        \end{aligned}
    \end{eqnarray}
    where we denote
    \begin{eqnarray}\label{BoundNDiriTr}
        \begin{aligned}
             \star^\Gamma\mathbf{Tr}^{\mathcal{D}}(\delta_{\mathbf{v}_{[N+1]}}\bomega):=& \delta_{\mathbf{v}_{N+1}}^{\bomega}\big(\star^\Gamma\mathbf{Tr}^\mathcal{D}(\delta_{\mathbf{v}_{[N]}}\bomega)\big)+\mathbf{i}_{\mathbf{v}_{N+1}}\mathbf{d}\big(\star^\Gamma\mathbf{Tr}^\mathcal{D}(\delta_{\mathbf{v}_{[N]}}\bomega)\big)\\
        &-\mathbf{i}_{\mathbf{v}_{N+1}}\mathbf{d}\star^\Gamma\mathbf{i}_\mathbf{n}\star\delta_{\mathbf{v}_{[N]}}\bomega.
        \end{aligned}
    \end{eqnarray}
    It implies the $(N+1)$th order shape derivative $\delta_{\mathbf{v}_{[N+1]}}\bomega$ satisfies
    \begin{eqnarray}
    \left\{
        \begin{aligned}
            &\mathbf{d}\delta_{\mathbf{v}_{[N+1]}}\brho + (-1)^l\star_{k^2}\delta_{\mathbf{v}_{[N+1]}}\bomega = \mathbf{0}&&{\rm in}\quad\Omega^e,\\
            &\star_\alpha^{-1}\delta_{\mathbf{v}_{[N+1]}}\brho - (-1)^{(l+1)(d-1)}\mathbf{d}\delta_{\mathbf{v}_{[N+1]}}\bomega=\mathbf{0}&&{\rm in}\quad\Omega^e,\\
            &\star^\Gamma\mathbf{i}_{\mathbf{n}}\star\delta_{\mathbf{v}_{[N+1]}}\bomega = \star^\Gamma\mathbf{Tr}^{\mathcal{D}}(\delta_{\mathbf{v}_{[N+1]}}\bomega) &&{\rm on}\quad \Gamma,\\
            &\mathcal{F}(\delta_{\mathbf{v}_{[N+1]}}\brho,\delta_{\mathbf{v}_{[N+1]}}\bomega) = \mathbf{0}&&{\rm on}\quad \Gamma_R.
        \end{aligned}
        \right.
    \end{eqnarray}

    Equation~\eqref{BoundNDiriTr} provides the recurrence formula for the boundary condition on $\Gamma$ when transitioning from the $N$th to the $N+1$th order shape derivative under the velocity field~\eqref{veloField1}. For the general velocity field given by equation~\eqref{veloField2}, one must consider the perturbation on the normal vector $\mathbf{n}$ in equations~\eqref{SG1},~\eqref{Eq2ord} and~\eqref{Eqnord}. The corresponding material derivative~\eqref{SG1} is then replaced by
\begin{eqnarray}\label{SG2}
\begin{aligned}
    \delta_\mathbf{v}\mathcal{M}(\brho,\bomega;\btau) =& \mathcal{M}(\delta_\mathbf{v}\brho,\delta_\mathbf{v}\bomega;\btau) + \mathcal{L}_\mathbf{v}\mathcal{M}(\brho,\bomega;\btau)\\
    =& (-1)^{(l+1)(d-1)}\Big[\int_{\Gamma^-}\mathbf{i}_\mathbf{v}\mathbf{d}\big((\star^\Gamma\mathbf{Tr}^\mathcal{D}(\bphi) + \star\mathbf{i}_\mathbf{n}\star\mathbf{i}_\mathbf{n}\bomega)\land\btau\big) + \int_{\Gamma_R^+}\delta_\mathbf{v}\bomega\land\btau\Big]\\
    &+(-1)^{(l+1)(d-1)}\int_{\Gamma^-}\delta_\mathbf{v}^{\mathbf{n}}(\star^\Gamma\mathbf{Tr}^{\mathcal{D}}(\bphi) + \star\mathbf{i}_\mathbf{n}\star\mathbf{i}_\mathbf{n}\bomega)\land\btau,
\end{aligned}
\end{eqnarray}
which introduces a new term $\delta_\mathbf{v}^{\mathbf{n}}(\star^\Gamma\mathbf{Tr}^{\mathcal{D}}(\bphi) + \star\mathbf{i}_\mathbf{n}\star\mathbf{i}_\mathbf{n}\bomega)$.
Since the scattered field $\bomega$ restricted on $\Gamma$ is independent of the perturbation on the normal direction, it holds
\begin{eqnarray}
    \mathbf{0} = \delta_\mathbf{v}^{\mathbf{n}}(\bomega|_{\Gamma}) = \delta_\mathbf{v}^{\mathbf{n}}\star^\Gamma\mathbf{Tr}^{\mathcal{D}}(\bomega) + \delta_\mathbf{v}^{\mathbf{n}}\star\mathbf{i}_{\mathbf{n}}\star\mathbf{i}_{\mathbf{n}}\bomega.
\end{eqnarray}
By the identities 
\begin{eqnarray}
    \delta_\mathbf{v}^{\mathbf{n}}(\star^\Gamma\mathbf{Tr}^{\mathcal{D}}(\bphi) = \star^\Gamma\mathbf{i}_{\delta_\mathbf{v}\mathbf{n}}\star\bphi, \qquad\delta_\mathbf{v}^{\mathbf{n}}\star^\Gamma\mathbf{Tr}^{\mathcal{D}}(\bomega) = \star^\Gamma\mathbf{i}_{\delta_\mathbf{v}\mathbf{n}}\star\bomega,
\end{eqnarray}
the boundary condition on $\Gamma$ in equation~\eqref{Dbd1} becomes
    \begin{eqnarray}
        \star^\Gamma\mathbf{Tr}^{\mathcal{D}}(\delta_\mathbf{v}\bomega))=\mathbf{i}_\mathbf{v}\mathbf{d}\star^\Gamma\mathbf{i}_\mathbf{n}\star\bphi + \star^\Gamma\mathbf{i}_{\delta_\mathbf{v}\mathbf{n}}\star\bphi -\mathbf{i}_\mathbf{v}\mathbf{d}\star^\Gamma\mathbf{i}_\mathbf{n}\star\bomega - \star^\Gamma\mathbf{i}_{\delta_\mathbf{v}\mathbf{n}}\star\bomega.
    \end{eqnarray}
Similarly, the recurrence~\eqref{BoundNDiriTr} is also modified as
\begin{eqnarray}\label{recurrenceDriGen}
\begin{aligned}
        \star^\Gamma\mathbf{Tr}^{\mathcal{D}}(\delta_{\mathbf{v}_{[N+1]}}\bomega)=&\delta_{\mathbf{v}_{N+1}}^{\bomega}\big(\star^\Gamma\mathbf{Tr}^\mathcal{D}(\delta_{\mathbf{v}_{[N]}}\bomega)\big) + \delta_{\mathbf{v}_{N+1}}^{\mathbf{n}}\big(\star^\Gamma\mathbf{Tr}^\mathcal{D}(\delta_{\mathbf{v}_{[N]}}\bomega)\big)\\
        &+\mathbf{i}_{\mathbf{v}_{N+1}}\mathbf{d}\big(\star^\Gamma\mathbf{Tr}^\mathcal{D}(\delta_{\mathbf{v}_{[N]}}\bomega)\big)\\
        &-\mathbf{i}_{\mathbf{v}_{N+1}}\mathbf{d}\star^\Gamma\mathbf{i}_\mathbf{n}\star\delta_{\mathbf{v}_{[N]}}\bomega - \star^\Gamma\mathbf{i}_{\delta_{\mathbf{v}_{N+1}}\mathbf{n}}\star\delta_{\mathbf{v}_{[N]}}\bomega,
\end{aligned}
\end{eqnarray}
which completes the proof.
\end{proof}

\subsection{Neumann boundary}\label{NeumannBound}
For a scattering problem with Neumann boundary condition, also known as the sound hard boundary condition in acoustics and perfect magnetic conductor (PMC) condition in electromagnetics, one can rewrite equations~\eqref{EqD}-\eqref{EqD3} as
\begin{eqnarray}\label{Nbe}
\left\{
    \begin{aligned}
        &\mathbf{d}(\star_\alpha\mathbf{d}\bomega) + (-1)^l\star_{k^2}\bomega = \mathbf{0} &&{\rm in}\quad \Omega^e,\\
        &\mathbf{Tr}^\mathcal{N}(\bomega) = \mathbf{Tr}^\mathcal{N}(\bphi) = \star^\Gamma_\alpha\mathbf{i}_\mathbf{n}\mathbf{d}\bphi&&{\rm on}\quad\Gamma,\\
        &\mathcal{F}(\bomega,\star_\alpha \mathbf{d}\bomega) = \mathbf{0}&&{\rm on} \quad \Gamma_R,
    \end{aligned}
    \right.
\end{eqnarray}
where we denote $\mathbf{Tr}^{\mathcal{N}}: = \star^\Gamma_\alpha\mathbf{i}_{\mathbf{n}}\mathbf{d}$ the Neumann trace operator. It holds the following trace operator decomposition on $\Gamma$
\begin{eqnarray}\label{separation}
    \star_\alpha\mathbf{d}\bomega|_\Gamma = \star\mathbf{i}_\mathbf{n}\star\mathbf{i}_\mathbf{n}\star_\alpha\mathbf{d}\bomega + \mathbf{Tr}^\mathcal{N}(\bomega).
\end{eqnarray}
In particular, the decompositions for a scalar field $u$ and a vector field $\mathbf{E}$ on $\Gamma$ are given by
\begin{eqnarray}
\left\{
    \begin{aligned}
        &\alpha\nabla u|_\Gamma = -\mathbf{n}\times\mathbf{n}\times\alpha\nabla u + \mathbf{n}(\mathbf{n}\cdot\alpha\nabla u),\\
        &\alpha\nabla\times\mathbf{E}|_\Gamma = \mathbf{n} (\mathbf{n}\cdot\alpha\nabla\times\mathbf{E}) - \mathbf{n}\times\mathbf{n}\times\alpha\nabla\times\mathbf{E}.
    \end{aligned}
    \right.
\end{eqnarray}
The recurrence formula for the shape derivative of $\bomega$ under the Neumann boundary condition is given by the following theorem.
\begin{theorem}\label{ThemN}
    Let $\bomega$ be the solution of equation~\eqref{Nbe} and $\delta_{\mathbf{v}_{[N]}}\bomega$ be the $N$th order shape derivative w.r.t. $\mathbf{v}_{[N]}$ for $N = 0,1,\dots$, and  $\delta_{\mathbf{v}_{[N]}}\bomega = \bomega$ for $N = 0$. Then the $N+1$th order shape derivative $\delta_{\mathbf{v}_{[N+1]}}\bomega$ satisfies the same equation~\eqref{Nbe}, except that the boundary condition on $\Gamma$ is replaced by
     \begin{eqnarray}
         \begin{aligned}
         \mathbf{Tr}^\mathcal{N}(\delta_{\mathbf{v}_{[N+1]}}\bomega) =&\mathbf{i}_{\mathbf{v}_{N+1}}\mathbf{d}\mathbf{Tr}^{\mathcal{N}}(\delta_{\mathbf{v}_{[N]}}\bomega) + \delta_{\mathbf{v}_{N+1}}^{\bomega}\mathbf{Tr}^{\mathcal{N}}(\delta_{\mathbf{v}_{[N]}}\bomega) + \delta_{\mathbf{v}_{N+1}}^{\mathbf{n}}\mathbf{Tr}^{\mathcal{N}}(\delta_{\mathbf{v}_{[N]}}\bomega)\\
         &-\mathbf{i}_{\mathbf{v}_{N+1}}\mathbf{d}\star^\Gamma_\alpha\mathbf{i}_n\mathbf{d}\delta_{\mathbf{v}_{[N]}}\bomega - \star^\Gamma_\alpha\mathbf{i}_{\delta_{\mathbf{v}_{N+1}}\mathbf{n}}\mathbf{d}\delta_{\mathbf{v}_{[N]}}\bomega,
         \end{aligned}
     \end{eqnarray}
     where $\mathbf{Tr}^\mathcal{N}(\delta_{\mathbf{v}_{[N]}}\bomega)$ is the Neumann boundary condition for $\delta_{\mathbf{v}_{[N]}}\bomega$ on $\Gamma$. 
\end{theorem}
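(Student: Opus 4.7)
The plan is to mirror the derivation of Theorem~\ref{ThemD}, replacing the first order system formulation used there by a weak formulation of the second order equation~\eqref{Nbe}. Specifically, I would test the bulk equation against an $l$-form $\btau$ and apply Stokes' theorem once to produce a functional
\begin{equation*}
\mathcal{N}(\bomega;\btau) := \int_{\Omega^e}\star_\alpha\mathbf{d}\bomega\land\mathbf{d}\btau + (-1)^{d}\int_{\Omega^e}\star_{k^2}\bomega\land\btau,
\end{equation*}
so that~\eqref{Nbe} becomes equivalent to an identity where $\mathcal{N}(\bomega;\btau)$ equals a sum of boundary integrals on $\Gamma^-$ and $\Gamma_R^+$ whose integrands involve $\star_\alpha\mathbf{d}\bomega$. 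Inserting the trace operator decomposition~\eqref{separation} then splits the integrand on $\Gamma$ into its Neumann piece $\mathbf{Tr}^{\mathcal{N}}(\bphi)$ and its tangential complement $\star\mathbf{i}_\mathbf{n}\star\mathbf{i}_\mathbf{n}\star_\alpha\mathbf{d}\bomega$, in complete analogy with~\eqref{weakform0}.

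Next I would take the material derivative in $\mathbf{v}$ on both sides. Lemma~\ref{lemma21} produces $\mathcal{N}(\delta_\mathbf{v}\bomega;\btau)$ on the left up to a Lie-derivative boundary contribution, while Corollary~\ref{coro22} decomposes each boundary integrand on $\Gamma$ into an $\mathbf{i}_{\mathbf{v}}\mathbf{d}$-contribution, a $\delta_{\mathbf{v}}^{\bomega}$-contribution, and a $\delta_{\mathbf{v}}^{\mathbf{n}}$-contribution on the right. Exactly as in the derivation of~\eqref{Eq1ord}, the terms involving $\mathbf{i}_{\mathbf{v}}\btau$, $\mathbf{d}\btau$, and $\mathbf{i}_{\mathbf{v}}\mathbf{d}\btau$ collapse via identities analogous to~\eqref{Betadefination}, combined with the original boundary condition $\star_\alpha\mathbf{d}\bomega|_\Gamma = \mathbf{Tr}^{\mathcal{N}}(\bphi) + \star\mathbf{i}_\mathbf{n}\star\mathbf{i}_\mathbf{n}\star_\alpha\mathbf{d}\bomega$. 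The $\delta_{\mathbf{v}}^{\mathbf{n}}$ contributions are handled by observing that $\star_\alpha\mathbf{d}\bomega|_\Gamma$ is invariant under perturbations of the normal, so that $\delta_{\mathbf{v}}^{\mathbf{n}}\mathbf{Tr}^{\mathcal{N}}(\bomega)=-\delta_{\mathbf{v}}^{\mathbf{n}}(\star\mathbf{i}_\mathbf{n}\star\mathbf{i}_\mathbf{n}\star_\alpha\mathbf{d}\bomega)$ simplifies to $\star^\Gamma_\alpha\mathbf{i}_{\delta_{\mathbf{v}}\mathbf{n}}\mathbf{d}\bomega$, producing the last term claimed in the statement. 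This isolates the first order Neumann trace of $\delta_\mathbf{v}\bomega$ and delivers the base case.

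The inductive step follows the template of~\eqref{Eqnord}. Assuming the claim at order $N$, differentiating the weak identity in $\mathbf{v}_{N+1}$ generates three families of extra terms driven respectively by $\delta_{\mathbf{v}_{N+1}}\bomega$, $\delta_{\mathbf{v}_{N+1}}\mathbf{n}$, and $\delta_{\mathbf{v}_{N+1}}\mathbf{v}_j$ for $j\le N$. The third family is removed by the same elimination trick as in~\eqref{Eliminate}: reapplying the induction hypothesis with $\mathbf{v}_j$ replaced by $\delta_{\mathbf{v}_{N+1}}\mathbf{v}_j$ cancels exactly those contributions. What survives reassembles into the five claimed terms of $\mathbf{Tr}^{\mathcal{N}}(\delta_{\mathbf{v}_{[N+1]}}\bomega)$, while the bulk terms together with the transparent condition on $\Gamma_R$ propagate unchanged from one order to the next because~\eqref{Nbe} is linear in $\bomega$ and the truncation boundary $\Gamma_R$ is fixed.

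The main obstacle I anticipate is that the Neumann trace $\star^\Gamma_\alpha\mathbf{i}_\mathbf{n}\mathbf{d}$ carries an extra exterior derivative compared with its Dirichlet counterpart $\mathbf{i}_\mathbf{n}\star$. Consequently, applying Cartan's formula~\eqref{Cartan} during the material derivative generates additional $\mathbf{d}\mathbf{i}_{\mathbf{v}_{N+1}}$ contributions whose cancellation requires careful use of the graded Leibniz rules~\eqref{distrlaw} together with the fact that $\Gamma$ is closed so that surface Stokes' contributions vanish. Tracking the signs through the induction, both those coming from the opposite orientations of $\Gamma^-$ and $\Gamma_R^+$ and those coming from the $(-1)^{l_1}$ factors in~\eqref{distrlaw}, is where the bookkeeping is delicate, and will require the same symmetrisation of test-form manipulations performed between~\eqref{Betafunction} and~\eqref{BoundNDiriTr} in the Dirichlet proof.
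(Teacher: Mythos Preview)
Your proposal is correct and follows essentially the same approach as the paper: test~\eqref{Nbe} against an $l$-form $\btau$, integrate by parts to obtain a weak identity (the paper's $\mathcal{M}(\bomega;\btau)$ differs from your $\mathcal{N}$ only by global sign conventions), insert the decomposition~\eqref{separation}, and iterate the material derivative with the same elimination trick for the $\delta_{\mathbf{v}_{N+1}}\mathbf{v}_j$ terms as in~\eqref{Eliminate}. The only organizational difference is that the paper first establishes the recurrence under the simplified assumption $\mathbf{v}=v\mathbf{n}$ with constant $v$ (so $\delta_\mathbf{v}\mathbf{n}\equiv\vec{0}$) and appends the $\delta^{\mathbf{n}}$-contributions in a separate final step, whereas you fold them in from the outset.
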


\begin{proof}

Given a test form $\btau\in\land^l$, the first equation in~\eqref{Nbe} yields
\begin{eqnarray}\label{Nbe_Int}
    \int_{\Omega^e}\mathbf{d}(\star_\alpha\mathbf{d}\bomega)\land\btau + (-1)^l\star_{k^2}\bomega\land\btau=0.
\end{eqnarray}
Based on integration by parts and Stokes' theorem, equation~\eqref{Nbe_Int} can be rewritten as 
\begin{eqnarray}\label{NeumannWeak}
\begin{aligned}
     &(-1)^{(d-l)}\int_{\Omega^e}\star_\alpha\mathbf{d}\bomega\land\mathbf{d}\btau + (-1)^l\int_{\Omega^e}\star_{k^2}\bomega\land\btau\\
      =& -\int_{\Gamma^-} \star_\alpha\mathbf{d}\bomega\land\btau - \int_{\Gamma_R^+}\star_\alpha\mathbf{d}\bomega\land\btau.
\end{aligned}
\end{eqnarray}
Denote $\mathcal{M}(\bomega;\btau)$ the left-hand side of equation~\eqref{NeumannWeak}. Substituting the decomposition~\eqref{separation} into the right-hand side of~\eqref{NeumannWeak} yields
\begin{eqnarray}\label{Mdv}
    \mathcal{M}(\bomega;\btau) = -\int_{\Gamma^-}\star\mathbf{i}_\mathbf{n}\star\mathbf{i}_\mathbf{n}\star_\alpha\mathbf{d}\bomega\land\btau -\int_{\Gamma^-} \mathbf{Tr}^{\mathcal{N}}(\bphi)\land\btau - \int_{\Gamma_R^+}\star_\alpha\mathbf{d}\bomega\land\btau.
\end{eqnarray}
Under the condition~\eqref{veloField1}, taking the material derivative w.r.t. $\mathbf{v}$ on both sides of~\eqref{Mdv} gives
\begin{eqnarray}\label{NBDmaterial}
\begin{aligned}
     \mathcal{M}(\delta_\mathbf{v}\bomega;\btau)+\mathcal{L}_\mathbf{v}\mathcal{M}(\bomega;\btau) =& -\int_{\Gamma^-} \mathbf{i}_\mathbf{v}\mathbf{d}(\star\mathbf{i}_\mathbf{n}\star\mathbf{i}_\mathbf{n}\star_\alpha\mathbf{d}\bomega\land\btau) - \int_{\Gamma^-}\mathbf{i}_\mathbf{v}\mathbf{d}(\mathbf{Tr}^{\mathcal{N}}(\bphi)\land\btau)\\
    &- \int_{\Gamma^-}\star\mathbf{i}_\mathbf{n}\star\mathbf{i}_\mathbf{n}\star_\alpha\mathbf{d}\delta_\mathbf{v}\bomega\land\btau - \int_{\Gamma_R^+}\star_\alpha\mathbf{d}\delta_\mathbf{v}\bomega\land\btau.
\end{aligned}
\end{eqnarray}
Based on the Lie derivative of $\mathcal{M}(\bomega;\btau)$, equation~\eqref{NBDmaterial} becomes
\begin{eqnarray}\label{NBD1}
\begin{aligned}
    \mathcal{M}(\delta_\mathbf{v}\bomega;\btau) =& -\int_{\Gamma^-} \big((-1)^l\mathbf{i}_\mathbf{v}\star_{k^2}\bomega + \mathbf{i}_\mathbf{v}\mathbf{d}\star\mathbf{i}_\mathbf{n}\star\mathbf{i}_\mathbf{n}\star_\alpha\mathbf{d}\bomega + \mathbf{i}_\mathbf{v}\mathbf{d}\mathbf{Tr}^{\mathcal{N}}(\bphi)\big)\land\btau\\
    &- (-1)^{d-l}\int_{\Gamma^-}\big((-1)^{l}\star_{k^2}\bomega + \mathbf{d}\star\mathbf{i}_\mathbf{n}\star\mathbf{i}_\mathbf{n}\star_\alpha\mathbf{d}\bomega + \mathbf{d}\mathbf{Tr}^{\mathcal{N}}(\bphi)\big)\land\mathbf{i}_\mathbf{v}\btau\\
    &-\int_{\Gamma^-}(-1)^{d-1}(\mathbf{i}_\mathbf{v}\star_\alpha\mathbf{d}\bomega - \mathbf{i}_\mathbf{v}\star\mathbf{i}_\mathbf{n}\star\mathbf{i}_\mathbf{n}\star_\alpha\mathbf{d}\bomega-\mathbf{i}_\mathbf{v}\mathbf{Tr}^{\mathcal{N}}(\bphi))\land\mathbf{d}\btau\\
    &-\int_{\Gamma^-}(\star\mathbf{i}_\mathbf{n}\star\mathbf{i}_\mathbf{n}\star_\alpha\mathbf{d}\bomega + \mathbf{Tr}^{\mathcal{N}}(\bphi) -\star_\alpha\mathbf{d}\bomega)\land\mathbf{i}_\mathbf{v}\mathbf{d}\btau\\
    &- \int_{\Gamma^-}\star\mathbf{i}_\mathbf{n}\star\mathbf{i}_\mathbf{n}\star_\alpha\mathbf{d}\delta_\mathbf{v}\bomega\land\btau - \int_{\Gamma_R^+}\star_\alpha\mathbf{d}\delta_\mathbf{v}\bomega\land\btau.
\end{aligned}
\end{eqnarray}
 Employing the following identities on $\Gamma$: 
 \begin{eqnarray}
 \left\{
 \begin{aligned}
     &(-1)^l\star_{k^2}\bomega + \mathbf{d}\star\mathbf{i}_\mathbf{n}\star\mathbf{i}_\mathbf{n}\star_\alpha\mathbf{d}\bomega = - \mathbf{d}\star^\Gamma_\alpha\mathbf{i}_\mathbf{n}\mathbf{d}\bomega,\\
     &\mathbf{i}_{\mathbf{n}}\star\Big(\big( \mathbf{d}\mathbf{Tr}^{\mathcal{N}}(\bphi) -  \mathbf{d}\star^\Gamma_\alpha\mathbf{i}_\mathbf{n}\mathbf{d}\bomega\big)\land\mathbf{i}_\mathbf{v}\btau\Big) = \mathbf{0},\\
     &\star\mathbf{i}_\mathbf{n}\star\mathbf{i}_\mathbf{n}\star_\alpha\mathbf{d}\bomega + \mathbf{Tr}^{\mathcal{N}}(\bphi) - \star_\alpha\mathbf{d}\bomega = \mathbf{0},
 \end{aligned}
 \right.
 \end{eqnarray}
 equation~\eqref{NBD1} can be rewritten as
 \begin{eqnarray}\label{NBD11}
 \begin{aligned}
     \mathcal{M}(\delta_\mathbf{v}\bomega;\btau)=&-\int_{\Gamma^-} (-\mathbf{i}_\mathbf{v}\mathbf{d}\star^\Gamma_\alpha\mathbf{i}_\mathbf{n}\mathbf{d}\bomega + \mathbf{i}_\mathbf{v}\mathbf{d}\mathbf{Tr}^{\mathcal{N}}(\bphi))\land\btau \\
    &-\int_{\Gamma^-}\star\mathbf{i}_\mathbf{n}\star\mathbf{i}_\mathbf{n}\star_\alpha\mathbf{d}\delta_\mathbf{v}\bomega\land\btau - \int_{\Gamma_R^+}\star_\alpha\mathbf{d}\delta_\mathbf{v}\bomega\land\btau.
    \end{aligned}
 \end{eqnarray}
 Equation~\eqref{NBD11} implies that the first order shape derivative $\delta_\mathbf{v}\bomega$ satisfies the equation
 \begin{eqnarray}\label{Nbd1e}
 \left\{
     \begin{aligned}
         &\mathbf{d}(\star_\alpha\mathbf{d}\delta_\mathbf{v}\bomega) + (-1)^l\star_{k^2}\delta_\mathbf{v}\bomega = \mathbf{0} &&{\rm in}\quad \Omega^e,\\
         &\mathbf{Tr}^\mathcal{N}(\delta_\mathbf{v}\bomega) = -\mathbf{i}_\mathbf{v}\mathbf{d}\star^\Gamma_\alpha\mathbf{i}_\mathbf{n}\mathbf{d}\bomega + \mathbf{i}_\mathbf{v}\mathbf{d}\mathbf{Tr}^{\mathcal{N}}(\bphi) &&{\rm on}\quad\Gamma,\\
         &\mathcal{F}(\delta_\mathbf{v}\bomega,\star_\alpha \mathbf{d}\delta_\mathbf{v}\bomega) = \mathbf{0} &&{\rm on} \quad \Gamma_R.
     \end{aligned}
     \right.
 \end{eqnarray}

Following the same procedure as the Dirichlet case, the second order shape derivative $\delta_{[\mathbf{v},\mathbf{w}]}\bomega$ can be obtained by taking material derivative of equation~\eqref{NBD11} w.r.t. the second velocity field $\mathbf{w}$:
\begin{eqnarray}\label{NeumannWeak2}
    \begin{aligned}
        &\mathcal{M}(\delta_{[\mathbf{v},\mathbf{w}]}\bomega;\btau) + \mathcal{L}_\mathbf{w}\mathcal{M}(\delta_\mathbf{v}\bomega,\btau)\\
        =&-\int_{\Gamma^-}\mathbf{i}_\mathbf{w}\mathbf{d}\Big((-\mathbf{i}_\mathbf{v}\mathbf{d}\star^\Gamma_\alpha\mathbf{i}_\mathbf{n}\mathbf{d}\bomega + \mathbf{i}_\mathbf{v}\mathbf{d}\mathbf{Tr}^{\mathcal{N}}(\bphi))\land\btau\Big)\\
        &-\int_{\Gamma^-}\mathbf{i}_\mathbf{w}\mathbf{d}(\star\mathbf{i}_\mathbf{n}\star\mathbf{i}_\mathbf{n}\star_\alpha\mathbf{d}\delta_\mathbf{v}\bomega\land\btau)\\
        &-\int_{\Gamma^-} \big((-1)^l\mathbf{i}_\mathbf{v}\star_{k^2}\delta_\mathbf{w}\bomega + \mathbf{i}_\mathbf{v}\mathbf{d}\star\mathbf{i}_\mathbf{n}\star\mathbf{i}_\mathbf{n}\star_\alpha\mathbf{d}\delta_\mathbf{w}\bomega\big)\land\btau\\
        &- \int_{\Gamma^-}\star\mathbf{i}_\mathbf{n}\star\mathbf{i}_\mathbf{n}\star_\alpha\mathbf{d}\delta_{[\mathbf{v},\mathbf{w}]}\bomega\land\btau - \int_{\Gamma_R^+}\star_\alpha\mathbf{d}\delta_{[\mathbf{v},\mathbf{w}]}\bomega\land\btau.\\
    \end{aligned}
\end{eqnarray}
Here we omit the terms containing $\delta_{\mathbf{w}}\mathbf{v}$ in equation~\eqref{NeumannWeak2} by using the same argument as in equation~\eqref{Eliminate}. According to the following identities on $\Gamma$:
\begin{eqnarray}\label{Nbdw3}
\left\{
    \begin{aligned}
    &(-1)^l\star_{k^2}\delta_\mathbf{v}\bomega + \mathbf{d}\star\mathbf{i}_\mathbf{n}\star\mathbf{i}_\mathbf{n}\star_\alpha\mathbf{d}\delta_\mathbf{v}\bomega = - \mathbf{d}\star^\Gamma_\alpha\mathbf{i}_\mathbf{n}\mathbf{d}\delta_\mathbf{v}\bomega,\\
     &\mathbf{i}_{\mathbf{n}}\star\Big(\big( \mathbf{d}\mathbf{Tr}^{\mathcal{N}}(\delta_\mathbf{v}\bomega) -  \mathbf{d}\star^\Gamma_\alpha\mathbf{i}_\mathbf{n}\mathbf{d}\delta_\mathbf{v}\bomega\big)\land\mathbf{i}_\mathbf{w}\btau\Big) = \mathbf{0},\\
     &\star\mathbf{i}_\mathbf{n}\star\mathbf{i}_\mathbf{n}\star_\alpha\mathbf{d}\delta_\mathbf{v}\bomega + \mathbf{Tr}^{\mathcal{N}}(\delta_\mathbf{v}\bomega) - \star_\alpha\mathbf{d}\delta_\mathbf{v}\bomega = \mathbf{0},
    \end{aligned}
\right.
\end{eqnarray}
equation~\eqref{NeumannWeak2} can be rewritten as 
\begin{eqnarray}\label{Nbdw1}
\begin{aligned}
    \mathcal{M}(\delta_{[\mathbf{v},\mathbf{w}]}\bomega,\btau) =& -\int_{\Gamma^-}\mathbf{Tr}^\mathcal{N}(\delta_{[\mathbf{v},\mathbf{w}]}\bomega)\land\btau \\
    &- \int_{\Gamma^-}\star\mathbf{i}_\mathbf{n}\star\mathbf{i}_\mathbf{n}\star_\alpha\mathbf{d}\delta_{[\mathbf{v},\mathbf{w}]}\bomega\land\btau - \int_{\Gamma_R^+}\star_\alpha\mathbf{d}\delta_{[\mathbf{v},\mathbf{w}]}\bomega\land\btau,
\end{aligned}
\end{eqnarray}
where the trace operator $\mathbf{Tr}^\mathcal{N}(\delta_{[\mathbf{v},\mathbf{w}]}\bomega)$ is given by
\begin{eqnarray}\label{Nbdw2}
    \begin{aligned}
        \mathbf{Tr}^\mathcal{N}(\delta_{[\mathbf{v},\mathbf{w}]}\bomega) =& -\mathbf{i}_\mathbf{w}\mathbf{d}\mathbf{i}_\mathbf{v}\mathbf{d}\star^\Gamma_\alpha\mathbf{i}_\mathbf{n}\mathbf{d}\bomega + \mathbf{i}_\mathbf{w}\mathbf{d}\mathbf{i}_\mathbf{v}\mathbf{d}\mathbf{Tr}^{\mathcal{N}}(\bphi)\\
        & + (-1)^l\mathbf{i}_\mathbf{v}\star_{k^2}\delta_\mathbf{w}\bomega + \mathbf{i}_\mathbf{v}\mathbf{d}\star\mathbf{i}_\mathbf{n}\star\mathbf{i}_\mathbf{n}\star_\alpha\mathbf{d}\delta_\mathbf{w}\bomega\\
        & + (-1)^l\mathbf{i}_\mathbf{w}\star_{k^2}\delta_\mathbf{v}\bomega + \mathbf{i}_\mathbf{w}\mathbf{d}\star\mathbf{i}_\mathbf{n}\star\mathbf{i}_\mathbf{n}\star_\alpha\mathbf{d}\delta_\mathbf{v}\bomega\\
        =&-\mathbf{i}_\mathbf{w}\mathbf{d}\mathbf{i}_\mathbf{v}\mathbf{d}\star^\Gamma_\alpha\mathbf{i}_\mathbf{n}\mathbf{d}\bomega + \mathbf{i}_\mathbf{w}\mathbf{d}\mathbf{i}_\mathbf{v}\mathbf{d}\mathbf{Tr}^{\mathcal{N}}(\bphi)\\
        &- \mathbf{i}_\mathbf{v}\mathbf{d}\star^\Gamma_\alpha\mathbf{i}_\mathbf{n}\mathbf{d}\delta_\mathbf{w}\bomega -\mathbf{i}_\mathbf{w}\mathbf{d}\star^\Gamma_\alpha\mathbf{i}_\mathbf{n}\mathbf{d}\delta_\mathbf{v}\bomega.
    \end{aligned}
\end{eqnarray}
Therefore, the equation for the second order shape derivative is formulated as
\begin{eqnarray}\label{NeumBD2}
\left\{
\begin{aligned}
    &\mathbf{d}(\star_\alpha\mathbf{d}\delta_{[\mathbf{v},\mathbf{w}]}\bomega) + (-1)^l\star_{k^2}\delta_{[\mathbf{v},\mathbf{w}]}\bomega = \mathbf{0} &&{\rm in}\quad \Omega^e,\\
    &\star^\Gamma_\alpha\mathbf{i}_{\mathbf{n}}\mathbf{d}\delta_{[\mathbf{v},\mathbf{w}]} = \mathbf{Tr}^N(\delta_{[\mathbf{v},\mathbf{w}]}\bomega)&&{\rm on}\quad\Gamma,\\
    &\mathcal{F}(\delta_{[\mathbf{v},\mathbf{w}]}\bomega,\star_\alpha \mathbf{d}\delta_{[\mathbf{v},\mathbf{w}]}\bomega) = \mathbf{0}&&{\rm on} \quad \Gamma_R.
\end{aligned}
\right.
\end{eqnarray}

We inductively prove the boundary conditions for higher order shape derivatives w.r.t. the velocity fields $\mathbf{v}_{[N+1]}$. Suppose that the boundary condition for $\delta_{\mathbf{v}_{[N]}}\bomega$ has been given by $\mathbf{Tr}^{\mathcal{N}}(\delta_{\mathbf{v}_{[N]}}\bomega)$. Then $\delta_{\mathbf{v}_{[N+1]}}\bomega$ satisfies
\begin{eqnarray}\label{NbdwN0}
    \begin{aligned}
         &\mathcal{M}(\delta_{\mathbf{v}_{[N+1]}}\bomega,\btau)+\mathcal{L}_{\mathbf{v}_{N+1}}\mathcal{M}(\delta_{\mathbf{v}_{[N]}}\bomega,\btau) \\=
        &-\int_{\Gamma^-}\mathbf{i}_{\mathbf{v}_{N+1}}\mathbf{d}\big(\mathbf{Tr}^\mathcal{N}(\delta_{\mathbf{v}_{[N]}}\bomega)\land\btau\big)-\int_{\Gamma^-}\mathbf{i}_{\mathbf{v}_{N+1}}\mathbf{d}(\star\mathbf{i}_\mathbf{n}\star\mathbf{i}_\mathbf{n}\star_\alpha\mathbf{d}\delta_{\mathbf{v}_{[N]}}\bomega\land\btau)\\
        &-\int_{\Gamma^-} \delta_{\mathbf{v}_{N+1}}^{\bomega}\mathbf{Tr}^\mathcal{N}(\delta_{\mathbf{v}_{[N]}}\bomega)\land\btau- \int_{\Gamma^-}\star\mathbf{i}_\mathbf{n}\star\mathbf{i}_\mathbf{n}\star_\alpha\mathbf{d}\delta_{\mathbf{v}_{[N+1]}}\bomega\land\btau\\
        &- \int_{\Gamma_R^+}\star_\alpha\mathbf{d}\delta_{\mathbf{v}_{[N+1]}}\bomega\land\btau.\\
    \end{aligned}
\end{eqnarray}
According to the definition of Lie derivative for $\mathcal{M}(\delta_{\mathbf{v}_{[N]}}\bomega,\btau)$ and the following identities on $\Gamma$
\begin{eqnarray}\label{NbdwN2}
\left\{
\begin{aligned}
    &(-1)^l\star_{k^2}\delta_{\mathbf{v}_{[N]}}\bomega + \mathbf{d}\star\mathbf{i}_\mathbf{n}\star\mathbf{i}_\mathbf{n}\star_\alpha\mathbf{d}\delta_{\mathbf{v}_{[N]}}\bomega = - \mathbf{d}\star^\Gamma_\alpha\mathbf{i}_\mathbf{n}\mathbf{d}\delta_{\mathbf{v}_{[N]}}\bomega,\\
     &\mathbf{i}_{\mathbf{n}}\star\Big(\big( \mathbf{d}\mathbf{Tr}^{\mathcal{N}}(\delta_{\mathbf{v}_{[N]}}\bomega) -  \mathbf{d}\star^\Gamma_\alpha\mathbf{i}_\mathbf{n}\mathbf{d}\delta_{\mathbf{v}_{[N]}}\bomega\big)\land\mathbf{i}_{\mathbf{v}_{N+1}}\btau\Big) = \mathbf{0},\\
     &\star\mathbf{i}_\mathbf{n}\star\mathbf{i}_\mathbf{n}\star_\alpha\mathbf{d}\delta_{\mathbf{v}_{[N]}}\bomega + \mathbf{Tr}^{\mathcal{N}}(\delta_{\mathbf{v}_{[N]}}\bomega) - \star_\alpha\mathbf{d}\delta_{\mathbf{v}_{[N]}}\bomega = \mathbf{0},
\end{aligned}
\right.
\end{eqnarray}
equation~\eqref{NbdwN0} can be rewritten as
\begin{eqnarray}\label{NbdwN1}
\begin{aligned}
    \mathcal{M}(\delta_{\mathbf{v}_{[N+1]}}\bomega,\btau) =&-\int_{\Gamma^-} \mathbf{Tr}^{\mathcal{N}}(\delta_{[N+1]}\bomega)\land\btau\\
    &- \int_{\Gamma^-}\star\mathbf{i}_\mathbf{n}\star\mathbf{i}_\mathbf{n}\star_\alpha\mathbf{d}\delta_{\mathbf{v}_{[N+1]}}\bomega\land\btau - \int_{\Gamma_R^+}\star_\alpha\mathbf{d}\delta_{\mathbf{v}_{[N+1]}}\bomega\land\btau,
\end{aligned}
\end{eqnarray}
with
\begin{eqnarray}
\begin{aligned}
    \mathbf{Tr}^{\mathcal{N}}(\delta_{[N+1]}\bomega) =& \mathbf{i}_{\mathbf{v}_{N+1}} \mathbf{d}\mathbf{Tr}^{\mathcal{N}}(\delta_{\mathbf{v}_{[N]}}\bomega) + \delta_{\mathbf{v}_{N+1}}^{\bomega}\mathbf{Tr}^{\mathcal{N}}(\delta_{\mathbf{v}_{[N]}}\bomega)\\
    &+\mathbf{i}_{\mathbf{v}_{N+1}}\mathbf{d}\star\mathbf{i}_\mathbf{n}\star\mathbf{i}_\mathbf{n}\star_\alpha\mathbf{d}\delta_{\mathbf{v}_{[N]}}\bomega - \mathbf{i}_{\mathbf{v}_{N+1}}\mathbf{d}\star_\alpha\mathbf{d}\delta_{\mathbf{v}_{[N]}}\bomega\\
    =&\mathbf{i}_{\mathbf{v}_{N+1}}\mathbf{d}\mathbf{Tr}^{\mathcal{N}}(\delta_{\mathbf{v}_{[N]}}\bomega) + \delta_{\mathbf{v}_{N+1}}\mathbf{Tr}^{\mathcal{N}}(\delta_{\mathbf{v}_{[N]}}\bomega) -\mathbf{i}_{\mathbf{v}_{N+1}}\mathbf{d}\star^\Gamma_\alpha\mathbf{i}_\mathbf{n}\mathbf{d}\delta_{\mathbf{v}_{[N]}}\bomega.
\end{aligned}
\end{eqnarray}
Therefore, the equation for the $(N+1)$th order shape derivative is given by
\begin{eqnarray}\label{Neunord}
\left\{
\begin{aligned}
     &\mathbf{d}(\star_\alpha\mathbf{d}\delta_{\mathbf{v}_{[N+1]}}\bomega) + (-1)^l\star_{k^2}\delta_{\mathbf{v}_{[N+1]}}\bomega = \mathbf{0}&& {\rm in}\quad \Omega^e,\\
    &\star^\Gamma_\alpha\mathbf{i}_{\mathbf{n}}\mathbf{d}\delta_{\mathbf{v}_{[N+1]}}\bomega = \mathbf{Tr}^\mathcal{N}(\delta_{\mathbf{v}_{[N+1]}}\bomega) &&{\rm on}\quad\Gamma,\label{reccforNeumann}\\
    &\mathcal{F}(\delta_{\mathbf{v}_{[N+1]}}\bomega,\star_\alpha \mathbf{d}\delta_{\mathbf{v}_{[N+1]}}\bomega) = \mathbf{0}&& {\rm on}\quad \Gamma_R.
\end{aligned}
\right.
\end{eqnarray}

Note that equation~\eqref{Neunord} is derived under the velocity fields of the form given in equation~\eqref{veloField1}. For the general case given by equation~\eqref{veloField2}, equation~\eqref{NBDmaterial} is replaced by:
\begin{eqnarray}\label{NBDmaterial2}
\begin{aligned}
    &\mathcal{M}(\delta_\mathbf{v}\bomega;\btau)+\mathcal{L}_\mathbf{v}\mathcal{M}(\bomega;\btau)\\
    =& -\int_{\Gamma^-} \mathbf{i}_\mathbf{v}\mathbf{d}(\star\mathbf{i}_\mathbf{n}\star\mathbf{i}_\mathbf{n}\star_\alpha\mathbf{d}\bomega\land\btau) - \int_{\Gamma^-}\mathbf{i}_\mathbf{v}\mathbf{d}(\mathbf{Tr}^{\mathcal{N}}(\bphi)\land\btau)\\
    &-\int_{\Gamma^-}\star\mathbf{i}_\mathbf{n}\star\mathbf{i}_\mathbf{n}\star_\alpha\mathbf{d}\delta_\mathbf{v}\bomega\land\btau -\int_{\Gamma^-}\delta_\mathbf{v}^{\mathbf{n}}(\star\mathbf{i}_\mathbf{n}\star\mathbf{i}_\mathbf{n}\star_\alpha\mathbf{d}\bomega)\land\btau\\
    &- \int_{\Gamma^-}\delta_\mathbf{v}^\mathbf{n}(\star^\Gamma_\alpha\mathbf{i}_\mathbf{n}\mathbf{d}\phi)\land\btau-\int_{\Gamma_R^+}\star_\alpha\mathbf{d}\delta_\mathbf{v}\bomega\land\btau.
\end{aligned}
\end{eqnarray}
Since $\star_\alpha\mathbf{d}\bomega$ restricted on $\Gamma$ is independent of the normal direction, one has
\begin{eqnarray}
    \mathbf{0} = \delta_\mathbf{v}^{\mathbf{n}}(\star_\alpha\mathbf{d}\bomega|_\Gamma) = \delta_\mathbf{v}^{\mathbf{n}}(\star\mathbf{i}_\mathbf{n}\star\mathbf{i}_\mathbf{n}\star_\alpha\mathbf{d}\bomega) + \delta_\mathbf{v}^{\mathbf{n}}\mathbf{Tr}^\mathcal{N}(\bomega) \qquad {\rm on}\quad\Gamma.
\end{eqnarray}
Thus, the boundary condition on $\Gamma$ for the first order shape derivative $\delta_{\mathbf{v}}\bomega$ is modified to
\begin{eqnarray}\label{traced2}
\begin{aligned}
    \mathbf{Tr}^\mathcal{N}(\delta_\mathbf{v}\bomega) =& -\mathbf{i}_\mathbf{v}\mathbf{d}\star^\Gamma_\alpha\mathbf{i}_\mathbf{n}\mathbf{d}\bomega - \star^\Gamma_\alpha\mathbf{i}_{\delta_\mathbf{v}\mathbf{n}}\mathbf{d}\bomega \\
    &+ \mathbf{i}_\mathbf{v}\mathbf{d}\mathbf{Tr}^{\mathcal{N}}(\bphi) + \star^\Gamma_\alpha\mathbf{i}_{\delta_\mathbf{v}\mathbf{n}}\mathbf{d}\bphi.
\end{aligned}
\end{eqnarray}
 Analogously, the recurrence formula in~\eqref{reccforNeumann} for the $N+1$th order shape derivative is replaced by
 \begin{eqnarray}\label{tracerecurrence2}
 \begin{aligned}
     \mathbf{Tr}^\mathcal{N}(\delta_{\mathbf{v}_{[N+1]}}\bomega) =&\mathbf{i}_{\mathbf{v}_{N+1}}\mathbf{d}\mathbf{Tr}^{\mathcal{N}}(\delta_{\mathbf{v}_{[N]}}\bomega) + \delta_{\mathbf{v}_{N+1}}^{\bomega}\mathbf{Tr}^{\mathcal{N}}(\delta_{\mathbf{v}_{[N]}}\bomega) + \delta_{\mathbf{v}_{N+1}}^{\mathbf{n}}\mathbf{Tr}^{\mathcal{N}}(\delta_{\mathbf{v}_{[N]}}\bomega)\\
    &-\mathbf{i}_{\mathbf{v}_{N+1}}\mathbf{d}\star^\Gamma_\alpha\mathbf{i}_n\mathbf{d}\delta_{\mathbf{v}_{[N]}}\bomega - \star^\Gamma_\alpha\mathbf{i}_{\delta_{\mathbf{v}_{N+1}}\mathbf{n}}\mathbf{d}\delta_{\mathbf{v}_{[N]}}\bomega.
 \end{aligned}
 \end{eqnarray}
 \end{proof}
\section{Extension to impedance and transmission boundary conditions}\label{sec4}
The derivations of the shape derivatives for the impedance and transmission boundary conditions closely follow the procedures used for the Dirichlet and Neumann cases. In this section, we provide a brief derivation of the shape derivatives for these two boundary conditions.
\subsection{Impedance boundary}\label{ImpBound}
Consider the total field $\bomega$ on $\Gamma$  that satisfies the impedance boundary condition
\begin{eqnarray}\label{impbound}
    \mathbf{Tr}^\mathcal{N}(\bomega) + (-1)^l i\lambda\mathbf{Tr}^\mathcal{D}(\bomega) = \mathbf{0}.
\end{eqnarray}
It is important to note that there is no essential difference between the shape derivatives of the total field and the scattered field, as the incident field remains unaffected by the perturbations of $\Gamma$. The recurrence formula of shape derivatives for $\bomega$ under the impedance boundary condition is given by the following theorem.
\begin{theorem}\label{TheoI}
    Let $\bomega$ be the solution of equation~\eqref{Nbe} with boundary condition~\eqref{impbound} on $\Gamma$ and $\delta_{\mathbf{v}_{[N]}}\bomega$ be the $N$th order shape derivative w.r.t. $\mathbf{v}_{[N]}$, with $\delta_{\mathbf{v}_{[N]}}\bomega=\bomega$ for $N=0$. Then the recurrence formula for the boundary condition on $\Gamma$ of the $N+1$th shape derivative $\delta_{\mathbf{v}_{[N+1]}}\bomega$ on $\Gamma$ is given by
    \begin{eqnarray}\label{tracerecurrence32}
    \begin{aligned}
    &\mathbf{Tr}^\mathcal{N}(\delta_{\mathbf{v}_{[N+1]}}\bomega) + (-1)^li\lambda\mathbf{Tr}^\mathcal{D}(\delta_{\mathbf{v}_{[N+1]}}\bomega)\\
    =& -\mathbf{i}_{\mathbf{v}_{N+1}}\mathbf{d}\big(\mathbf{Tr}^\mathcal{N}(\delta_{\mathbf{v}_{[N]}}\bomega) + (-1)^li\lambda\mathbf{Tr}^\mathcal{D}(\delta_{\mathbf{v}_{[N]}}\bomega)\big)\\
    &-\delta_{\mathbf{v}_{N+1}}^{\bomega}\big(\mathbf{Tr}^\mathcal{N}(\delta_{\mathbf{v}_{[N]}}\bomega) + (-1)^li\lambda\mathbf{Tr}^\mathcal{D}(\delta_{\mathbf{v}_{[N]}}\bomega)\big)\\
    &-\delta_{N+1}^{\mathbf{n}}\big(\mathbf{Tr}^\mathcal{N}(\delta_{\mathbf{v}_{[N]}}\bomega) + (-1)^li\lambda\mathbf{Tr}^\mathcal{D}(\delta_{\mathbf{v}_{[N]}}\bomega)\big)\\
    &-\mathbf{i}_{\mathbf{v}_{N+1}}\mathbf{d}\big(\star^\Gamma_\alpha\mathbf{i}_\mathbf{n}\mathbf{d}\delta_{\mathbf{v}_{[N]}}\bomega + (-1)^li\lambda\mathbf{i}_\mathbf{n}\star\delta_{\mathbf{v}_{[N]}}\bomega\big)\\
    &-\delta_{\mathbf{v}_{N+1}}^{\mathbf{n}}\big(\star^\Gamma_\alpha\mathbf{i}_\mathbf{n}\mathbf{d}\delta_{\mathbf{v}_{[N]}}\bomega + (-1)^li\lambda\mathbf{i}_\mathbf{n}\star\delta_{\mathbf{v}_{[N]}}\bomega\big).
    \end{aligned}
    \end{eqnarray}
\end{theorem}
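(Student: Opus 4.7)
The plan is to follow the same strategy used in Theorems~\ref{ThemD} and~\ref{ThemN}, now applied to the combined impedance trace operator
\[
\mathcal{I}(\bomega) := \mathbf{Tr}^{\mathcal{N}}(\bomega) + (-1)^l i\lambda \, \mathbf{Tr}^{\mathcal{D}}(\bomega),
\]
so that the boundary condition~\eqref{impbound} reads $\mathcal{I}(\bomega) = \mathbf{0}$ on $\Gamma$. The starting point is the weak formulation~\eqref{NeumannWeak} used in the Neumann derivation. Decomposing the $\Gamma^-$ integral via~\eqref{separation} and substituting $\mathbf{Tr}^{\mathcal{N}}(\bomega) = -(-1)^l i\lambda \, \mathbf{Tr}^{\mathcal{D}}(\bomega)$ produces a variational identity whose boundary datum on $\Gamma$ is $\mathcal{I}(\bomega)$ itself, while the unknown tangential piece $\star\mathbf{i}_\mathbf{n}\star\mathbf{i}_\mathbf{n}\star_\alpha\mathbf{d}\bomega$ retains the same role it played in the Neumann argument.

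First I would derive the first order result. Taking the material derivative of this identity w.r.t.\ a normal velocity field of the form~\eqref{veloField1}, following the same manipulations used in equations~\eqref{NBD1}--\eqref{NBD11}, and applying Cartan's formula~\eqref{Cartan}, the distributive laws~\eqref{distrlaw}, and the boundary identities analogous to~\eqref{Nbdw3} to eliminate the integrands involving $\mathbf{i}_{\mathbf{v}}\btau$, $\mathbf{d}\btau$, and $\mathbf{i}_{\mathbf{v}}\mathbf{d}\btau$, one arrives at
\[
\mathcal{I}(\delta_{\mathbf{v}}\bomega) = -\mathbf{i}_{\mathbf{v}}\mathbf{d}\,\mathcal{I}(\bomega) - \mathbf{i}_{\mathbf{v}}\mathbf{d}\bigl(\star^\Gamma_\alpha \mathbf{i}_\mathbf{n}\mathbf{d}\bomega + (-1)^l i\lambda \, \mathbf{i}_\mathbf{n}\star\bomega\bigr).
\]
General velocity fields of the form~\eqref{veloField2} introduce the additional $\delta^{\mathbf{n}}_{\mathbf{v}}$ contributions through Corollary~\ref{coro22}, handled exactly as in~\eqref{SG2} and~\eqref{NBDmaterial2}.

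The inductive step proceeds by direct analogy with the two preceding theorems. Assuming the $N$th order shape derivative satisfies the weak form with boundary datum $\mathcal{I}(\delta_{\mathbf{v}_{[N]}}\bomega)$, I would take the material derivative w.r.t.\ $\mathbf{v}_{N+1}$, apply Lemma~\ref{lemma21} and Corollary~\ref{coro22} to split the result into the $\delta^{\bomega}$, $\delta^{\mathbf{v}}$, and $\delta^{\mathbf{n}}$ pieces, eliminate the $\delta_{\mathbf{v}_{N+1}}\mathbf{v}_j$ contributions by invoking the induction hypothesis at lower orders (as was done in~\eqref{Eliminate}), and use the identities analogous to~\eqref{NbdwN2} to discard all the auxiliary $\mathbf{d}\btau$-type integrands. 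The remaining boundary expression on $\Gamma$ is precisely the right-hand side of~\eqref{tracerecurrence32}.

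The main obstacle is the careful bookkeeping needed to combine the Dirichlet-type and Neumann-type manipulations into a single consistent recurrence. In particular, the contraction $\mathbf{i}_{\mathbf{v}_{N+1}}\mathbf{d}$ must be applied both to the combined trace $\mathcal{I}(\delta_{\mathbf{v}_{[N]}}\bomega)$ and to the ``bulk'' normal piece $\star^\Gamma_\alpha\mathbf{i}_\mathbf{n}\mathbf{d}\delta_{\mathbf{v}_{[N]}}\bomega + (-1)^l i\lambda \, \mathbf{i}_\mathbf{n}\star\delta_{\mathbf{v}_{[N]}}\bomega$, and the $\delta^{\mathbf{n}}_{\mathbf{v}_{N+1}}$ perturbations acting on both $\mathbf{Tr}^{\mathcal{N}}$ and $\mathbf{Tr}^{\mathcal{D}}$ must be shown to collapse consistently into the two normal-derivative terms appearing in~\eqref{tracerecurrence32}. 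The technical crux is verifying that the orthogonal surface decompositions underlying Theorems~\ref{ThemD} and~\ref{ThemN} remain simultaneously compatible after the impedance substitution, so that no stray tangential--normal cross terms obstruct the elimination of the test-form derivatives.
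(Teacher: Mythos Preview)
Your proposal is correct and follows essentially the same route as the paper: start from the Neumann weak form~\eqref{Mdv}, replace the known datum $\mathbf{Tr}^{\mathcal{N}}(\bphi)$ by $-(-1)^l i\lambda\,\mathbf{Tr}^{\mathcal{D}}(\bomega)$ to obtain the impedance analogue~\eqref{impe0}, take the material derivative, simplify using the same boundary identities as in the Neumann case, and then pass from the restricted velocity fields~\eqref{veloField1} to the general ones~\eqref{veloField2} exactly as in~\eqref{NBDmaterial2}--\eqref{tracerecurrence2}. The only remark is that your anticipated ``main obstacle'' of combining separate Dirichlet- and Neumann-type manipulations is not actually present: the paper's proof never invokes the Dirichlet machinery of Theorem~\ref{ThemD} at all, since the term $(-1)^l i\lambda\,\mathbf{Tr}^{\mathcal{D}}(\bomega)$ simply plays the role of a known boundary datum in the Neumann argument and is carried along passively through the material derivative, so the bookkeeping is no harder than in Theorem~\ref{ThemN}.
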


\begin{proof}

For simplicity, we still begin by assuming that the velocity field $\mathbf{v}$ is given by equation~\eqref{veloField1}. Replacing the term $\mathbf{Tr}^{\mathcal{N}}(\bphi)$ in equation~\eqref{Mdv} by $-(-1)^li\lambda\mathbf{Tr}^\mathcal{D}(\bomega)$, it yields
\begin{eqnarray}\label{impe0}
    \mathcal{M}(\bomega,\btau) = - \int_{\Gamma^-}\star\mathbf{i}_\mathbf{n}\star\mathbf{i}_\mathbf{n}\star_\alpha\mathbf{d}\bomega\land\btau - (-1)^li\lambda\mathbf{Tr}^\mathcal{D}(\bomega)\land\btau - \int_{\Gamma_R^+}\star_\alpha\mathbf{d}\bomega\land\btau.
\end{eqnarray}
Take the material derivative of equation~\eqref{impe0}. Based on the same argument as in the Neumann case, we obtain
\begin{eqnarray}\label{Imp01}
    \begin{aligned}
        \mathcal{M}(\delta_\mathbf{v}\bomega;\btau) =& -\int_{\Gamma^-}(-1)^{d-l}\mathbf{i}_\mathbf{v}(\star_\alpha\mathbf{d}\bomega\land\mathbf{d}\btau) + (-1)^{l}\mathbf{i}_\mathbf{v}(\star_{k^2}\bomega\land\btau)\\
        &-\int_{\Gamma^-}\mathbf{i}_\mathbf{v}\mathbf{d}(\star\mathbf{i}_\mathbf{n}\star\mathbf{i}_\mathbf{n}\star_\alpha\mathbf{d}\bomega\land\btau) - \mathbf{i}_\mathbf{v}\mathbf{d}(i\lambda\mathbf{Tr}^D(\bomega)\land\btau)\\
        &-\int_{\Gamma^-}\star\mathbf{i}_\mathbf{n}\star\mathbf{i}_\mathbf{n}\star_\alpha\mathbf{d}\delta_\mathbf{v}\bomega\land\btau - (-1)^li\lambda\mathbf{Tr}^\mathcal{D}(\delta_\mathbf{v}\bomega)\land\btau - \int_{\Gamma_R^+}\star_\alpha\delta_\mathbf{v}\bomega\land\btau\\
        =&-\int_{\Gamma^-} - \mathbf{i}_\mathbf{v}\mathbf{d}\big(\mathbf{Tr}^\mathcal{N}(\bomega) + (-1)^li\lambda\mathbf{Tr}^\mathcal{D}(\bomega)\big)\land\btau - (-1)^li\lambda\mathbf{Tr}^\mathcal{D}(\delta_\mathbf{v}\bomega)\land\btau\\
        &-\int_{\Gamma^-}\star\mathbf{i}_\mathbf{n}\star\mathbf{i}_\mathbf{n}\star_\alpha\mathbf{d}\delta_\mathbf{v}\bomega\land\btau - \int_{\Gamma_R^+}\star_\alpha\mathbf{d}\bomega\land\btau.
    \end{aligned}
\end{eqnarray}
Thus the boundary condition of $\delta_\mathbf{v}\bomega$ on $\Gamma$ is given by
\begin{eqnarray}\label{impB01}
    \mathbf{Tr}^\mathcal{N}(\delta_\mathbf{v}\bomega) + (-1)^li\lambda\mathbf{Tr}^\mathcal{D}(\delta_\mathbf{v}\bomega) = -(-1)^l\mathbf{i}_\mathbf{v}\mathbf{d}\big(\mathbf{Tr}^\mathcal{N}(\bomega) + (-1)^li\lambda\mathbf{Tr}^\mathcal{D}(\bomega)\big).
\end{eqnarray}
Analogously, the recurrence formula from the $N$th order shape derivative to the $(N+1)$th order is given by
\begin{eqnarray}\label{tracerecurrence3}
\begin{aligned}
    &\mathbf{Tr}^\mathcal{N}(\delta_{\mathbf{v}_{[N+1]}}\bomega) + (-1)^li\lambda\mathbf{Tr}^\mathcal{D}(\delta_{\mathbf{v}_{[N+1]}}\bomega)\\
    =& -\mathbf{i}_{\mathbf{v}_{N+1}}\mathbf{d}\big(\mathbf{Tr}^\mathcal{N}(\delta_{\mathbf{v}_{[N]}}\bomega) + (-1)^li\lambda\mathbf{Tr}^\mathcal{D}(\delta_{\mathbf{v}_{[N]}}\bomega)\big)\\
    &-\delta_{\mathbf{v}_{N+1}}^{\bomega}\big(\mathbf{Tr}^\mathcal{N}(\delta_{\mathbf{v}_{[N]}}\bomega) + (-1)^li\lambda\mathbf{Tr}^\mathcal{D}(\delta_{\mathbf{v}_{[N]}}\bomega)\big)\\
    &-\mathbf{i}_{\mathbf{v}_{N+1}}\mathbf{d}\big(\star^\Gamma_\alpha\mathbf{i}_\mathbf{n}\mathbf{d}\delta_{\mathbf{v}_{[N]}}\bomega + (-1)^li\lambda\mathbf{i}_\mathbf{n}\delta_{\mathbf{v}_{[N]}}\bomega\big).
\end{aligned}
\end{eqnarray}
 For general perturbations defined by equation~\eqref{veloField2}, the proof is completely the same as in the Neumann case. The boundary condition on $\Gamma$ given by~\eqref{impB01} is replaced by
\begin{eqnarray}
\begin{aligned}
    \mathbf{Tr}^\mathcal{N}(\delta_\mathbf{v}\bomega) + (-1)^li\lambda\mathbf{Tr}^\mathcal{D}(\delta_\mathbf{v}\bomega) =& -\mathbf{i}_\mathbf{v}\mathbf{d}\big(\mathbf{Tr}^\mathcal{N}(\bomega) + (-1)^li\lambda\mathbf{Tr}^\mathcal{D}(\bomega)\big)\\
    &- \delta^{\mathbf{n}}_{\mathbf{v}_{N+1}}\big(\mathbf{Tr}^\mathcal{N}(\bomega) + (-1)^li\lambda\mathbf{Tr}^\mathcal{D}(\bomega)\big),
\end{aligned}
\end{eqnarray}
and the recurrence formula~\eqref{tracerecurrence32} is obtained based on the same derivation as equation~\eqref{tracerecurrence2}.

\end{proof}

\subsection{Transmission boundary}\label{TransBound}
The transmission boundary condition on $\Gamma$ consists of $[\star^\Gamma\mathbf{Tr}^\mathcal{D}(\bomega)]$ and $[\mathbf{Tr}^\mathcal{N}(\bomega)]$, where $[\cdot]$ represents the jump across $\Gamma$ defined by
\begin{eqnarray}
    [f(\mathbf{x})] = \lim_{\mathbf{x}^e\in\Omega^e,\mathbf{x}^e\rightarrow \mathbf{x}}f(\mathbf{x}^e) - \lim_{\mathbf{x}^i\in\Omega,\mathbf{x}^i\rightarrow \mathbf{x}}f(\mathbf{x}^i), \quad \mathbf{x}\in\Gamma.
\end{eqnarray}
Here the total field $\bomega$ is an $l$-form defined on $\Omega\cup\Omega^e$ and satisfies
\begin{eqnarray}\label{Eqtransmission}
\left\{
\begin{aligned}
    &\mathbf{d}\brho + (-1)^l\star_{k^2}\bomega = \mathbf{0}&&{\rm in}\quad\Omega\cup\Omega^e,\\
    &\star_\alpha^{-1}\brho - (-1)^{(l+1)(d-1)}\mathbf{d}\bomega=\mathbf{0}&&{\rm in}\quad\Omega\cup\Omega^e,\\
    &[\star^\Gamma\mathbf{Tr}^{\mathcal{D}}(\bomega)]=[\mathbf{Tr}^{\mathcal{N}}(\bomega)] = \mathbf{0}&&{\rm on}\quad \Gamma,\\
    &\mathcal{F}(\brho,\bomega) = \mathbf{0} &&{\rm on}\quad \Gamma_R.
\end{aligned}
\right.
\end{eqnarray}
The recurrence formula for the shape derivatives of $\bomega$ under the transmission boundary condition is given
by the following theorem.

\begin{theorem}\label{TheoT}
     Let $\bomega$ be the solution of equation~\eqref{Eqtransmission} and $\delta_{\mathbf{v}_{[N]}}\bomega$ be the $N$th order shape derivative w.r.t. $\mathbf{v}_{[N]}$, with $\delta_{\mathbf{v}_{[N]}}\bomega=\bomega$ for $N=0$. Assume that the boundary conditions for $\delta_{\mathbf{v}_{[N]}}\bomega$ on $\Gamma$ are given by $ \big[\star^\Gamma\mathbf{Tr}^{\mathcal{D}}(\delta_{\mathbf{v}_{[N]}}\bomega)\big]$ and $\big[\mathbf{Tr}^\mathcal{N}(\delta_{\mathbf{v}_{[N]}}\bomega)\big]$. Then the recurrence formulas for the boundary conditions on $\Gamma$ of the $N+1$th order shape derivative $\delta_{\mathbf{v}_{[N+1]}}\bomega$ on $\Gamma$ are given by
     \begin{eqnarray}\label{recurrenceTransGen1}
    \begin{aligned}
    \big[\star^\Gamma\mathbf{Tr}^{\mathcal{D}}(\delta_{\mathbf{v}_{[N+1]}}\bomega)\big]=&\big[\mathbf{i}_{\mathbf{v}_{N+1}}\mathbf{d}\big(\star^\Gamma\mathbf{Tr}^\mathcal{D}(\delta_{\mathbf{v}_{[N]}}\bomega)\big)\\
    &+\delta_{\mathbf{v}_{N+1}}^{\bomega}\big(\star^\Gamma\mathbf{Tr}^\mathcal{D}(\delta_{\mathbf{v}_{[N]}}\bomega)\big)+\delta_{\mathbf{v}_{N+1}}^{\mathbf{n}}\big(\star^\Gamma\mathbf{Tr}^\mathcal{D}(\delta_{\mathbf{v}_{[N]}}\bomega)\big)\\
    &-\mathbf{i}_{\mathbf{v}_{N+1}}\mathbf{d}\star^\Gamma\mathbf{i}_\mathbf{n}\star\delta_{\mathbf{v}_{[N]}}\bomega - \star^\Gamma\mathbf{i}_{\delta_{\mathbf{v}_{N+1}}\mathbf{n}}\star\delta_{\mathbf{v}_{[N]}}\bomega\big],
    \end{aligned}
\end{eqnarray}
and
\begin{eqnarray}\label{recurrenceTransGen2}
 \begin{aligned}
     \big[\mathbf{Tr}^\mathcal{N}(\delta_{\mathbf{v}_{[N+1]}}\bomega)\big] =&\big[\mathbf{i}_{\mathbf{v}_{N+1}}\mathbf{d}\mathbf{Tr}^{\mathcal{N}}(\delta_{\mathbf{v}_{[N]}}\bomega)\\
     &+ \delta_{\mathbf{v}_{N+1}}^{\bomega}\mathbf{Tr}^{\mathcal{N}}(\delta_{\mathbf{v}_{[N]}}\bomega) + \delta_{\mathbf{v}_{N+1}}^{\mathbf{n}}\mathbf{Tr}^{\mathcal{N}}(\delta_{\mathbf{v}_{[N]}}\bomega)\\
    &-\mathbf{i}_{\mathbf{v}_{N+1}}\mathbf{d}\star^\Gamma_\alpha\mathbf{i}_\mathbf{n}\mathbf{d}\delta_{\mathbf{v}_{[N]}}\bomega - \star^\Gamma_\alpha\mathbf{i}_{\delta_{\mathbf{v}_{N+1}}\mathbf{n}}\mathbf{d}\delta_{\mathbf{v}_{[N]}}\bomega\big].
 \end{aligned}
 \end{eqnarray}
\end{theorem}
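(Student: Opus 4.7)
The plan is to run a joint induction that parallels the proofs of Theorem~\ref{ThemD} and Theorem~\ref{ThemN}, but with every boundary integral on $\Gamma^-$ reinterpreted as a jump across $\Gamma$. The key structural observation is that in the transmission setting the total field $\bomega$ satisfies the same first order system as in~\eqref{EqS} on each side of $\Gamma$ separately (with possibly different metric parameters $\alpha$ and $k^2$), and the only coupling between the interior and exterior is encoded in the two jump conditions. Because each velocity field $\mathbf{v}_j$ is compactly supported in a strip around $\Gamma$, the induced flow $T_t^{\mathbf{v}_j}$ transports the interior and exterior sides of $\Gamma$ in the same way; consequently the material derivative commutes with the jump bracket, i.e.\ $\delta_{\mathbf{v}_{N+1}}[f] = [\delta_{\mathbf{v}_{N+1}} f]$, and the same is true for the contraction and exterior derivative that appear in the Dirichlet and Neumann recurrences. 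This observation is what allows us to recycle the earlier arguments almost verbatim.

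The base case $N=0$ is immediate: the third line of~\eqref{Eqtransmission} is exactly the statement that the Dirichlet and Neumann jumps vanish. For the induction step I would treat the two jump equations separately but in the same spirit. For~\eqref{recurrenceTransGen1}, I would redo the weak formulation argument of Theorem~\ref{ThemD} over $\Omega\cup\Omega^e$: choose a test form $\btau\in\land^{d-l-1}$, integrate the second equation of the first order system on each side, apply integration by parts and Stokes' theorem, and observe that the boundary contributions from the two sides combine into an integral of a jump on $\Gamma$. Taking the material derivative with respect to $\mathbf{v}_{N+1}$, invoking Lemma~\ref{lemma21} and Corollary~\ref{coro22}, and using the trace decomposition~\eqref{separate} on each side reproduces the algebra of Theorem~\ref{ThemD} line for line, except that every appearance of the Dirichlet trace is now bracketed. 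Collecting terms yields~\eqref{recurrenceTransGen1}. For~\eqref{recurrenceTransGen2} I would mirror the proof of Theorem~\ref{ThemN}, starting from the second order variational identity over $\Omega\cup\Omega^e$, using the Neumann trace decomposition~\eqref{separation} on each side, and exploiting the same jump-exchange property; the induction step then produces exactly the jump of the Neumann recurrence~\eqref{tracerecurrence2}.

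The extension from the restricted velocity field~\eqref{veloField1} to the general form~\eqref{veloField2} is handled as in the Dirichlet and Neumann cases, by adding the $\delta^{\mathbf{n}}_{\mathbf{v}_{N+1}}$ contributions produced by the perturbation of the normal vector; again these contributions occur on each side of $\Gamma$ and assemble into the jump terms $\star^\Gamma\mathbf{i}_{\delta_{\mathbf{v}_{N+1}}\mathbf{n}}\star\delta_{\mathbf{v}_{[N]}}\bomega$ and $\star^\Gamma_\alpha\mathbf{i}_{\delta_{\mathbf{v}_{N+1}}\mathbf{n}}\mathbf{d}\delta_{\mathbf{v}_{[N]}}\bomega$ appearing in~\eqref{recurrenceTransGen1}-\eqref{recurrenceTransGen2}.

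The main obstacle I anticipate is bookkeeping rather than conceptual difficulty: one must verify that no spurious surface term is produced on $\Gamma$ when splitting integrals over $\Omega\cup\Omega^e$, that the orientations on $\Gamma$ used for the interior and exterior integrations are consistent with the definition of the jump bracket, and that the Lie derivative contributions coming from the two sides cancel exactly on terms that are continuous across $\Gamma$ (so that only the jump of a discontinuous quantity survives). Once these orientation and support issues are handled, the computations are term-for-term copies of those in the Dirichlet and Neumann proofs, with brackets inserted, and the recurrences~\eqref{recurrenceTransGen1}-\eqref{recurrenceTransGen2} follow by gathering interior and exterior contributions into jumps.
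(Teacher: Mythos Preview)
Your proposal is correct and follows essentially the same route as the paper: extend the weak formulations $\mathcal{M}_1$ and $\mathcal{M}_2$ to $\Omega\cup\Omega^e$, note that the $\Gamma^+$ and $\Gamma^-$ contributions combine into jumps (with the orientation bookkeeping you flag), derive the first order jump conditions by taking the material derivative, and then invoke the Dirichlet and Neumann recurrences of Theorems~\ref{ThemD} and~\ref{ThemN} on each side to obtain~\eqref{recurrenceTransGen1}--\eqref{recurrenceTransGen2}. The paper's proof is in fact even more terse than your outline: after computing the first order jumps explicitly it simply observes that the transmission recurrences are the differences of the interior and exterior Dirichlet/Neumann recurrences already established.
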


\begin{proof}
To find the equation satisfied by the shape derivative of $\bomega$, one needs to extend the equations~\eqref{weakform0} and~\eqref{Mdv} to the domain $\Omega\cup\Omega^e$. Let us define
\begin{eqnarray}
    \mathcal{M}_1(\brho,\bomega;\btau) = \int_{\Omega\cup\Omega^e}\star_{\alpha^{-1}}\brho\land\btau - (-1)^{(l+1)d} \int_{\Omega\cup\Omega^e}\bomega\land\mathbf{d}\btau,
\end{eqnarray}
and
\begin{eqnarray}
    \mathcal{M}_2(\bomega;\btau) = (-1)^{(d-l)}\int_{\Omega\cup\Omega^e}\star_\alpha\mathbf{d}\bomega\land\mathbf{d}\btau + (-1)^l\int_{\Omega\cup\Omega^e}\star_{k^2}\bomega\land\btau.
\end{eqnarray}
Following the derivation of equations~\eqref{weakform0} and~\eqref{Mdv}, we obtain
\begin{eqnarray}\label{transmissionw1}
\begin{aligned}
    \mathcal{M}_1(\brho,\bomega;\btau) =& (-1)^{(l+1)(d-1)}\Big(\int_{\Gamma^-}\star^\Gamma\mathbf{Tr}^{\mathcal{D}}(\bomega)\land\btau + \int_{\Gamma^-}\star\mathbf{i}_\mathbf{n}\star\mathbf{i}_\mathbf{n}\bomega\land\btau + \int_{\Gamma_R^+}\bomega\land\btau\Big)\\
    & + (-1)^{(l+1)(d-1)}\Big(\int_{\Gamma^+}\star^\Gamma\mathbf{Tr}^{\mathcal{D}}(\bomega)\land\btau + \int_{\Gamma^+}\star\mathbf{i}_\mathbf{n}\star\mathbf{i}_\mathbf{n}\bomega\land\btau\Big),
\end{aligned}
\end{eqnarray}
and
\begin{eqnarray}\label{transmissionw2}
\begin{aligned}
    \mathcal{M}_2(\bomega;\btau) =& -\int_{\Gamma^-}\star\mathbf{i}_\mathbf{n}\star\mathbf{i}_\mathbf{n}\star_\alpha\mathbf{d}\bomega\land\btau + \mathbf{Tr}^{\mathcal{N}}(\bomega)\land\btau - \int_{\Gamma_R^+}\star_\alpha\mathbf{d}\bomega\land\btau\\
    &-\int_{\Gamma^+}\star\mathbf{i}_\mathbf{n}\star\mathbf{i}_\mathbf{n}\star_\alpha\mathbf{d}\bomega\land\btau + \mathbf{Tr}^{\mathcal{N}}(\bomega)\land\btau.
\end{aligned}
\end{eqnarray}
Here, since the unit normal vectors on $\Gamma^+$ and $\Gamma^-$ are opposite, the integrands on $\Gamma^+$ and $\Gamma^-$ give the jump conditions in equation~\eqref{Eqtransmission}. By taking material derivatives of both sides of equation~\eqref{transmissionw1} w.r.t. $\mathbf{v}$ and using the same technique as in the derivation of equation~\eqref{Eq1ord}, one obtains
\begin{eqnarray}\label{transmission1ord1}
     \begin{aligned}
    \mathcal{M}_1(\delta_\mathbf{v}\brho,\delta_\mathbf{v}\bomega;\btau) =&(-1)^{(l+1)(d-1)}\Big(\int_{\Gamma^+} -\mathbf{i}_\mathbf{v}\mathbf{d}\star^\Gamma\mathbf{i}_\mathbf{n}\star\bomega\land\btau +\int_{\Gamma^+}\star\mathbf{i}_\mathbf{n}\star\mathbf{i}_\mathbf{n}\delta_\mathbf{v}\bomega\land\btau\\
    &+ \int_{\Gamma^-} -\mathbf{i}_\mathbf{v}\mathbf{d}\star^\Gamma\mathbf{i}_\mathbf{n}\star\bomega\land\btau +\int_{\Gamma^-}\star\mathbf{i}_\mathbf{n}\star\mathbf{i}_\mathbf{n}\delta_\mathbf{v}\bomega\land\btau + \int_{\Gamma_R^+}\delta_\mathbf{v}\bomega\land\btau\Big).
    \end{aligned}
\end{eqnarray}
It implies
\begin{eqnarray}\label{tranbound1}
    [\star^\Gamma\mathbf{Tr}^\mathcal{D}(\delta_\mathbf{v}\bomega)] = [-\mathbf{i}_\mathbf{v}\mathbf{d}\star^\Gamma\mathbf{i}_\mathbf{n}\star\bomega],
\end{eqnarray}
which gives the jump condition for the Dirichlet data for the first order shape derivative.

On the other hand, the jump condition for the Neumann data can be derived from equation~\eqref{transmissionw2}. By mimicking the derivation of equation~\eqref{NBD1}, one can find 
\begin{eqnarray}\label{transmission1ord2}
\begin{aligned}
    \mathcal{M}_2(\delta_\mathbf{v}\bomega;\btau) =&-\int_{\Gamma^-} -\mathbf{i}_\mathbf{v}\mathbf{d}\star^\Gamma_\alpha\mathbf{i}_\mathbf{n}\mathbf{d}\bomega\land\btau - \int_{\Gamma^-}\star\mathbf{i}_\mathbf{n}\star\mathbf{i}_\mathbf{n}\star_\alpha\mathbf{d}\delta_\mathbf{v}\bomega\land\btau - \int_{\Gamma_R^+}\star_\alpha\mathbf{d}\delta_\mathbf{v}\bomega\land\btau\\
    &-\int_{\Gamma^+} -\mathbf{i}_\mathbf{v}\mathbf{d}\star^\Gamma_\alpha\mathbf{i}_\mathbf{n}\mathbf{d}\bomega\land\btau - \int_{\Gamma^+}\star\mathbf{i}_\mathbf{n}\star\mathbf{i}_\mathbf{n}\star_\alpha\mathbf{d}\delta_\mathbf{v}\bomega\land\btau,
\end{aligned}
\end{eqnarray}
which implies
\begin{eqnarray}\label{tranbound2}
    [\mathbf{Tr}^\mathcal{N}(\delta_\mathbf{v}\bomega)] = [-\mathbf{i}_\mathbf{v}\mathbf{d}\star^\Gamma_\alpha\mathbf{i}_\mathbf{n}\mathbf{d}\bomega]. 
\end{eqnarray}

From equations~\eqref{tranbound1} and~\eqref{tranbound2}, we observe that the two jump conditions for the transmission boundary in the shape derivatives are simply the differences between the interior and exterior Dirichlet and Neumann boundary conditions on $\Gamma$. Therefore, the recurrence relations for the transmission boundary conditions, as given in Theorems~\ref{ThemD} and~\ref{ThemN}, lead to equations~\eqref{recurrenceTransGen1} and~\eqref{recurrenceTransGen2}.

\end{proof}

\section{Vector proxies of the second order shape derivatives}\label{VecPro}
    The first order shape derivative formulas have been summarized in~\cite{hiptmair2018shape} using surface differential operators. This part summarizes the boundary conditions for the second order shape derivatives in the vector proxies of Euclidean space. The corresponding proxies of $\bomega$, $\mathbf{d}$, $\mathbf{i}$ and $\land$ in $\mathbb{R}^d$, with $d=2,3$, are provided in Appendix~\ref{Appen}. Given two velocity fields $\mathbf{v}_1$ and $\mathbf{v}_2$ in $\mathbb{R}^d$, we define the three directional differential operators w.r.t. $\mathbf{v}_j$ for $j = 1,2$ as
\begin{eqnarray}\label{dirdifope}
    \mathbf{grad}_j u = \mathbf{v}_j\cdot\nabla u,\quad \mathbf{div}_j\mathbf{E}= \mathbf{v}_j\nabla\cdot\mathbf{E},\quad\mathbf{curl}_j\mathbf{E} = - \mathbf{v}_j\times\nabla\times\mathbf{E}.
\end{eqnarray}

\subsection{Vecter proxies for acoustic scattering problems}\label{vp1}
For the cases of $d = 2,3$, $l = 0$, we denote $\phi$ the incident field and $u$ the acoustic scattered field. Recall that the total field $u^t: = u + \phi$ in the impenetrable scattering problems (including Dirichlet, Neumann, and impedance boundaries) is only defined in $\Omega^e$, while in the penetrable scattering problems, $u^t$ is defined as
\begin{eqnarray}
    u^t = \left\{\begin{aligned}
        &u + \phi&&{\rm in}\quad \Omega^e,\\
        &u&&{\rm in}\quad \Omega.
    \end{aligned}\right.
\end{eqnarray}
The equations for the acoustic scattered fields are:
\begin{eqnarray}
    \begin{aligned}
        &{\rm Impenetrable\quad scattering:} && \nabla\cdot(\alpha\nabla u) + k^2 u = 0\quad{\rm in}\quad\Omega^e.\\
        &{\rm Penetrable\quad scattering:} &&\left\{\begin{aligned}
        &\nabla\cdot(\alpha^-\nabla u) + k^2 u = 0\quad{\rm in}\quad\Omega,\\
        &\nabla\cdot(\alpha^+\nabla u) + k^2 u = 0\quad{\rm in}\quad\Omega^e,
        \end{aligned}
        \right.
    \end{aligned}
\end{eqnarray}
with four different boundary conditions on $\Gamma=\partial\Omega$:
\begin{eqnarray}
    \begin{aligned}
        &{\rm Sound\ soft}: && u = -\phi.\\
        &{\rm Sound\ hard:} && \alpha(\nabla u\cdot \mathbf{n})\mathbf{n}= -\alpha(\nabla\phi\cdot \mathbf{n})\mathbf{n}.\\
        &{\rm Impedance:} && \alpha(\nabla u\cdot \mathbf{n})\mathbf{n} + i\lambda u\mathbf{n}= -\alpha(\nabla\phi\cdot \mathbf{n})\mathbf{n} - i\lambda \phi\mathbf{n}.\\
        &{\rm Transmission:} &&\left\{\begin{aligned}
        &[u] = -\phi,\\
        &[\alpha(\nabla u\cdot\mathbf{n})\mathbf{n}] = -\alpha^+(\nabla \phi\cdot \mathbf{n})\mathbf{n}.\end{aligned}\right.
    \end{aligned}
\end{eqnarray}
The boundary conditions on $\Gamma$ for the second order shape derivatives are:

\begin{tabular}{|l c l|}
    \hline
    ${\rm Sound\ soft^{(2)}}$ & &  \\ \hline
    $\delta_{\mathbf{v}_1,\mathbf{v}_2}u$ & $=$ & $-\mathbf{grad}_2\mathbf{grad}_1 u - \mathbf{grad}_2\mathbf{grad}_1\phi$\\
    & &  $ - \mathbf{grad}_1\delta_{\mathbf{v}_2}u - \mathbf{grad}_2\delta_{\mathbf{v}_1}u$ \\ \hline
    ${\rm Sound\ hard^{(2)}}$ & & \\ \hline
    $\alpha(\nabla\delta_{\mathbf{v}_1,\mathbf{v}_2}u\cdot\mathbf{n})\mathbf{n}$&$=$&$-\mathbf{div}_2\mathbf{div}_1(\alpha(\nabla u\cdot \mathbf{n})\mathbf{n}) + \alpha(\nabla u\cdot \delta_{\mathbf{v}_1,\mathbf{v}_2}\mathbf{n})\mathbf{n}$\\
    & & $-\mathbf{div}_2(\alpha(\nabla u\cdot\delta_{\mathbf{v}_1}\mathbf{n})\mathbf{n}) -\mathbf{div}_1(\alpha(\nabla u\cdot\delta_{\mathbf{v}_2}\mathbf{n})\mathbf{n})$\\
    & &$- \mathbf{div}_2(\alpha(\nabla\delta_{\mathbf{v}_1} u\cdot \mathbf{n})\mathbf{n}) + \alpha(\nabla\delta_{\mathbf{v}_1} u\cdot\delta_{\mathbf{v}_2}\mathbf{n})\mathbf{n}$\\
    & &$- \mathbf{div}_1(\alpha(\nabla\delta_{\mathbf{v}_2} u\cdot \mathbf{n})\mathbf{n}) + \alpha(\nabla\delta_{\mathbf{v}_2} u\cdot\delta_{\mathbf{v}_1}\mathbf{n})\mathbf{n}$\\
    & &$-\mathbf{div}_2\mathbf{div}_1(\alpha(\nabla \phi\cdot \mathbf{n})\mathbf{n}) + \alpha(\nabla \phi\cdot \delta_{\mathbf{v}_1,\mathbf{v}_2}\mathbf{n})\mathbf{n}$\\
    & &$-\mathbf{div}_2(\alpha(\nabla \phi\cdot\delta_{\mathbf{v}_1}\mathbf{n})\mathbf{n}) -\mathbf{div}_1(\alpha(\nabla \phi\cdot\delta_{\mathbf{v}_2}\mathbf{n})\mathbf{n})$\\ \hline
    ${\rm Impedance^{(2)}}$& & \\ \hline
    $i\lambda\delta_{\mathbf{v}_1,\mathbf{v}_2}u\mathbf{n} $ & $=$ & ${\rm\ Sound\ hard^{(2)}}-i\lambda\mathbf{di\mathbf{v}_2}\mathbf{di\mathbf{v}_1} \phi\mathbf{n}$\\
    $+\alpha(\nabla\delta_{\mathbf{v}_1,\mathbf{v}_2}u\cdot \mathbf{n})\mathbf{n}$ &  & $- i\lambda(\mathbf{di\mathbf{v}_2}\mathbf{di\mathbf{v}_1} u\mathbf{n} + \mathbf{div}_2\delta_{\mathbf{v}_1}u\mathbf{n} + \mathbf{div}_1\delta_{\mathbf{v}_2}u\mathbf{n})$ \\ \hline
    ${\rm Transmission^{(2)}}$ & &\\ \hline
    $[\delta_{\mathbf{v}_1}u]$ &$=$ & $[{\rm Sound\ soft^{(2)}}]$ \\
    $[\alpha(\nabla\delta_{\mathbf{v}_1}u\cdot\mathbf{n})\mathbf{n}]$&$=$ &$[{\rm Sound\ hard^{(2)}}]$\\ \hline
\end{tabular}

\subsection{Vector proxies for electromagnetic scattering problems}\label{vp2}
For the case of $d = 3$ and $l = 1$, we denote $\bPhi$ the incident field $\mathbf{E}$ and the scattered electric field. Similar to the acoustics cases, the total field $\mathbf{E}^t: = \mathbf{E} + \bPhi$ in the impenetrable scattering problem is defined in $\Omega^e$. In the penetrable scattering problem, $\mathbf{E}^t$ is defined as
\begin{eqnarray}
    \mathbf{E}^t = \left\{\begin{aligned}
    &\mathbf{E} + \bPhi&&{\rm in}\quad\Omega^e,\\
    &\mathbf{E}&&{\rm in}\quad\Omega.
    \end{aligned}\right.    
\end{eqnarray}
The equations for the electromagnetic scattered fields are:
\begin{eqnarray}
    \begin{aligned}
        &{\rm Impenetrable\quad scattering:} && \nabla\times(\alpha\nabla\times\mathbf{E}) - k^2\mathbf{E}  = \mathbf{0}\quad{\rm in}\quad\Omega^e.\\
        &{\rm Penetrable\quad scattering:} &&\left\{\begin{aligned}
        &\nabla\times(\alpha^-\nabla\times\mathbf{E}) - k^2\mathbf{E}  = \mathbf{0}\quad{\rm in}\quad\Omega,\\
        &\nabla\times(\alpha^+\nabla\times\mathbf{E}) - k^2\mathbf{E}  = \mathbf{0}\quad{\rm in}\quad\Omega^e,
        \end{aligned}
        \right.
    \end{aligned}
\end{eqnarray}
with four different boundary conditions on $\Gamma$:
\begin{eqnarray}
    \begin{aligned}
        &{\rm PEC:}  - \mathbf{n}\times\mathbf{n}\times\mathbf{E} &=& \mathbf{n}\times\mathbf{n}\times\bPhi.\\
        &{\rm PMC:}  - \mathbf{n}\times\mathbf{n}\times\alpha\nabla\times\mathbf{E} &=&\mathbf{n}\times\mathbf{n}\times\alpha\nabla\times\bPhi.\\
        &{\rm Impedance:}\\
        &- \mathbf{n}\times\mathbf{n}\times\alpha\nabla\times\mathbf{E} + i\lambda\mathbf{n}\times\mathbf{E} &=& \mathbf{n}\times\mathbf{n}\times\alpha\nabla\times\bPhi - i\lambda \mathbf{n}\times\bPhi.\\
        &{\rm Transmission:}\\
        &[- \mathbf{n}\times\mathbf{n}\times\mathbf{E}] &=& \mathbf{n}\times\mathbf{n}\times\bPhi.\\
        &[- \mathbf{n}\times\mathbf{n}\times\alpha\nabla\times\mathbf{E}] &=&\mathbf{n}\times\mathbf{n}\times\alpha\nabla\times\bPhi.
    \end{aligned}
\end{eqnarray}
The boundary conditions on $\Gamma$ for the second order shape derivatives are:

\begin{tabular}{|l c l|}
     \hline
     ${\rm PEC^{(2)}}$& &  \\ \hline
     $ - \mathbf{n}\times\mathbf{n}\times\delta_{\mathbf{v}_1,\mathbf{v}_2}\mathbf{E}$& $=$ &$\mathbf{curl}_2\mathbf{curl}_1\mathbf{n}\times\mathbf{n}\times\mathbf{E} + \mathbf{n}\times\delta_{\mathbf{v}_1,\mathbf{v}_2}\mathbf{n}\times\mathbf{E}$\\
        & &  $+\mathbf{curl}_2\mathbf{curl}_1\mathbf{n}\times\mathbf{n}\times\bPhi + \mathbf{n}\times\delta_{\mathbf{v}_1,\mathbf{v}_2}\mathbf{n}\times\bPhi$\\
        & &  $+ \mathbf{curl}_2\mathbf{n}\times\delta_{\mathbf{v}_1}\mathbf{n}\times\mathbf{E} + \mathbf{curl}_1\mathbf{n}\times\delta_{\mathbf{v}_2}\mathbf{n}\times\mathbf{E}$\\
        & & $+ \mathbf{curl}_2\mathbf{n}\times\delta_{\mathbf{v}_1}\mathbf{n}\times\bPhi + \mathbf{curl}_1\mathbf{n}\times\delta_{\mathbf{v}_2}\mathbf{n}\times\bPhi$\\
        & & $+ \mathbf{curl}_2\mathbf{n}\times\mathbf{n}\times\delta_{\mathbf{v}_1}\mathbf{E}+ \mathbf{n}\times\delta_{\mathbf{v}_2}\mathbf{n}\times\delta_{\mathbf{v}_1}\mathbf{E}$\\
        & & $+ \mathbf{curl}_1\mathbf{n}\times\mathbf{n}\times\delta_{\mathbf{v}_2}\mathbf{E}+ \mathbf{n}\times\delta_{\mathbf{v}_1}\mathbf{n}\times\delta_{\mathbf{v}_2}\mathbf{E}$ \\ \hline
        ${\rm PMC^{(2)}}$& &\\ \hline
        $- \mathbf{n}\times\mathbf{n}\times\alpha\nabla\times\delta_{\mathbf{v}_1,\mathbf{v}_2}\mathbf{E}$&$=$ & ${\rm Substitute}:$\\
        & & $\mathbf{E}\leftarrow\alpha\nabla\times\mathbf{E}, \quad\bPhi\leftarrow\alpha\nabla\times\bPhi,$ \\
        & &$\delta_{\mathbf{v}_1}\mathbf{E}\leftarrow\alpha\nabla\times\delta_{\mathbf{v}_1}\mathbf{E},\quad\delta_{\mathbf{v}_2}\mathbf{E}\leftarrow\alpha\nabla\times\delta_{\mathbf{v}_2}\mathbf{E}$\\
        & &${\rm in\ PEC^{(2)}}$\\ \hline
        ${\rm Impedance^{(2)}}$& &\\ \hline
        $ i\lambda\mathbf{n}\times\delta_{\mathbf{v}_1,\mathbf{v}_2}\mathbf{E}$ & $=$ & ${\rm PMC^{(2)}}$\\
        $- \mathbf{n}\times\mathbf{n}\times\alpha\nabla\times\delta_{\mathbf{v}_1,\mathbf{v}_2}\mathbf{E}$ & &$+\mathbf{curl}_2\mathbf{curl}_1(-i\lambda\mathbf{n}\times\mathbf{E}) - i\lambda \delta_{\mathbf{v}_1,\mathbf{v}_2}\mathbf{n}\times\mathbf{E}$\\
        & & $ + \mathbf{curl}_2\mathbf{curl}_1(-i\lambda\mathbf{n}\times\bPhi) - i\lambda \delta_{\mathbf{v}_1,\mathbf{v}_2}\mathbf{n}\times\bPhi$\\
        & &$-\mathbf{curl}_2(-i\lambda\delta_{\mathbf{v}_1}\mathbf{n}\times\mathbf{E}) -\mathbf{curl}_1(-i\lambda\delta_{\mathbf{v}_2}\mathbf{n}\times\mathbf{E})$\\
        & &$-\mathbf{curl}_2(-i\lambda\delta_{\mathbf{v}_1}\mathbf{n}\times\bPhi) -\mathbf{curl}_1(-i\lambda\delta_{\mathbf{v}_2}\mathbf{n}\times\bPhi)$\\
        & &$+\mathbf{curl}_2(-i\lambda\mathbf{n}\times\delta_{\mathbf{v}_1}\mathbf{E}) - i\lambda \delta_{\mathbf{v}_2}\mathbf{n}\times\delta_{\mathbf{v}_1}\mathbf{E}$\\
        & &$+\mathbf{curl}_1(-i\lambda\mathbf{n}\times\delta_{\mathbf{v}_2}\mathbf{E}) - i\lambda \delta_{\mathbf{v}_1}\mathbf{n}\times\delta_{\mathbf{v}_2}\mathbf{E}$\\ \hline
        ${\rm Transmission^{(2)}}$ & &\\ \hline
        $[- \mathbf{n}\times\mathbf{n}\times\delta_{\mathbf{v}_1,\mathbf{v}_2}\mathbf{E}]$ &$=$ & $[{\rm PEC^{(2)}}]$ \\
        $[- \mathbf{n}\times\mathbf{n}\times\alpha\nabla\times\delta_{\mathbf{v}_1,\mathbf{v}_2}\mathbf{E}]$ & $=$& $[{\rm PMC^{(2)}}]$\\ \hline
\end{tabular}

\begin{remark}
 Some of the second order formulas can be verified through the work of~\cite{hagemann2020application,harbrecht2018second}.
\end{remark}

\section{Conclusion}\label{secCon}
In this work, we derive the shape Taylor expansion for scattering problems based on exterior differential forms. Specifically, for acoustic and electromagnetic scattering problems with Dirichlet, Neumann, impedance, and transmission conditions, we present the recurrence formulas for shape derivatives up to any order. In the derivation, we introduce the trace operator decomposition to overcome the complexity induced by surface differential operators. We also provide vector proxies for the second order shape derivatives for acoustic and electromagnetic scattering problems. Results of this work can be applied to inverse scattering problems, optimal design problems, uncertainty quantification, and other problems involving shape parameters. The efficient algorithms for the shape Taylor formulas and their applications in optimizations will be explored in future work.

\appendix

\section{Vector proxies}\label{Appen}
 In this appendix, we give the vector proxies for the differential forms in $\mathbb{R}^d$, with $d = 2,3$.
\subsection{Exterior Derivative and Contraction}
\ \\
In $\mathbb{R}^2$:
\begin{eqnarray}
    \begin{aligned}
        &\bomega\in\land^0\Leftrightarrow u:\mathbb{R}^2\rightarrow\mathbb{R}, &&\mathbf{d}\bomega\Leftrightarrow\nabla u, &&\mathbf{i}_{\mathbf{v}}\bomega\Leftrightarrow0,\\
        &\bomega\in\land^1\Leftrightarrow \mathbf{u}:\mathbb{R}^2\rightarrow\mathbb{R}^2, &&\mathbf{d}\bomega\Leftrightarrow\frac{\partial u_2}{\partial x_1} - \frac{\partial u_1}{\partial x_2}, &&\mathbf{i}_{\mathbf{v}}\bomega\Leftrightarrow {\mathbf{v}}\cdot \mathbf{u},\\
        &\bomega\in\land^2\Leftrightarrow u:\mathbb{R}^2\rightarrow\mathbb{R}, &&\mathbf{d}\bomega\Leftrightarrow0, &&\mathbf{i}_{\mathbf{v}}\bomega\Leftrightarrow u[-v_2,v_1]^T.
    \end{aligned}
\end{eqnarray}
In $\mathbb{R}^3$:
\begin{eqnarray}
    \begin{aligned}
        &\bomega\in\land^0\Leftrightarrow u:\mathbb{R}^3\rightarrow\mathbb{R}, &&\mathbf{d}\bomega\Leftrightarrow\nabla u, &&\mathbf{i}_{\mathbf{v}}\bomega\Leftrightarrow0,\\
        &\bomega\in\land^1\Leftrightarrow \mathbf{u}:\mathbb{R}^3\rightarrow\mathbb{R}^3, &&\mathbf{d}\bomega\Leftrightarrow\nabla\times \mathbf{u}, &&\mathbf{i}_{\mathbf{v}}\bomega\Leftrightarrow {\mathbf{v}}\cdot \mathbf{u},\\
        &\bomega\in\land^2\Leftrightarrow \mathbf{u}:\mathbb{R}^3\rightarrow\mathbb{R}^3, &&\mathbf{d}\bomega\Leftrightarrow\nabla\cdot \mathbf{u}, &&\mathbf{i}_{\mathbf{v}}\bomega\Leftrightarrow \mathbf{u}\times {\mathbf{v}},\\
        &\bomega\in\land^3\Leftrightarrow u:\mathbb{R}^3\rightarrow\mathbb{R}, &&\mathbf{d}\bomega\Leftrightarrow0, &&\mathbf{i}_{\mathbf{v}}\bomega\Leftrightarrow u{\mathbf{v}}.
    \end{aligned}
\end{eqnarray}
\subsection{Exterior product}
\ \\
In $\mathbb{R}^2$:
\begin{eqnarray}
   \begin{aligned}
       &\bomega\in\land^0,\bbeta\in\land^{0,1,2},&&\bomega\land\bbeta\Leftrightarrow u{\mathbf{v}},\\
       &\bomega\in\land^1,\bbeta\in\land^1,&&\bomega\land\bbeta\Leftrightarrow u_1v_2 - u_2v_1.
   \end{aligned}
\end{eqnarray}
In $\mathbb{R}^3$:
\begin{eqnarray}
    \begin{aligned}
        &\bomega\in\land^0,\bbeta\in\land^{0,1,2,3},&&\bomega\land\bbeta\Leftrightarrow u{\mathbf{v}},\\
        &\bomega\in\land^1,\bbeta\in\land^1,&&\bomega\land\bbeta\Leftrightarrow \mathbf{u}\times {\mathbf{v}},\\
        &\bomega\in\land^1,\bbeta\in\land^2,&&\bomega\land\bbeta\Leftrightarrow \mathbf{u}\cdot {\mathbf{v}}.\\
    \end{aligned}
\end{eqnarray}

\subsection{Trace operators} 
\ \\
Dirichlet trace:
\begin{eqnarray}
    \begin{aligned}
        &d = 2,3,&&\bomega\in\land^0,&&\mathbf{Tr}^\mathcal{D}(\bomega) \Leftrightarrow \mathbf{n}u|_\Gamma,\\
        &d = 3,&&\bomega\in\land^1,&& \mathbf{Tr}^\mathcal{D}(\bomega) \Leftrightarrow -\mathbf{n}\times \mathbf{u}|_\Gamma.
    \end{aligned}
\end{eqnarray}
Neumann trace:
\begin{eqnarray}
\begin{aligned}
    &d = 2,3,&&\bomega\in\land^0,&&\mathbf{Tr}^\mathcal{N}(\bomega)\Leftrightarrow\mathbf{n}(\mathbf{n}\cdot\alpha\nabla u)|_\Gamma,\\
    &d = 3,&&\bomega\in\land^1,&&\mathbf{Tr}^\mathcal{N}(\bomega)\Leftrightarrow-\mathbf{n}\times\mathbf{n}\times\alpha\nabla\times \mathbf{u}|_\Gamma.
\end{aligned}
\end{eqnarray}

\bibliographystyle{plain} 
\bibliography{REF.bib} 
\end{document}